\DeclareMathOperator{\dist}{dist}
\DeclareMathOperator{\supp}{supp}
\DeclareMathOperator{\tail}{tail}
\newtheorem{theorem}{Theorem}[section]
\newtheorem{proposition}[theorem]{Proposition}
\newtheorem{corollary}[theorem]{Corollary}
\theoremstyle{remark}\newtheorem{remark}[theorem]{Remark}
\DeclareFontFamily{U}{mathx}{\hyphenchar\font45}
\DeclareFontShape{U}{mathx}{m}{n}{
      <5> <6> <7> <8> <9> <10>
      <10.95> <12> <14.4> <17.28> <20.74> <24.88>
      mathx10
      }{}
\DeclareSymbolFont{mathx}{U}{mathx}{m}{n}
\DeclareMathAccent{\widecheck}{0}{mathx}{"71}
\DeclareMathAccent{\wideparen}{0}{mathx}{"75}
\title{Refined Strichartz inequalities for the wave equation}
\author[Terence L.~J.~Harris]{Terence L.~J.~Harris}
\address{Department of Mathematics, University of Illinois, Urbana, IL 61801, U.S.A.}
\email{terence2@illinois.edu}
\subjclass[2010]{42B37; 42B10}
\keywords{Wave equation, fractal measure}
\thanks{This material is based upon work partially supported by the National Science Foundation under Grant No. DMS-1501041. I would like to thank Burak Erdoğan for suggesting the problem and for advice on this topic, and for financial support.
}
\numberwithin{equation}{section}
\begin{document}

\begin{abstract} Some analogues of the Schrödinger refined Strichartz inequalities (Du, Guth, Li and Zhang) are obtained for the wave equation. These are used to improve the best known $L^2$ fractal Strichartz inequalities for the wave equation in dimensions $d \geq 4$. 
 \end{abstract}

\maketitle
\section{Introduction} 
A special case of the Strichartz inequality for the wave equation asserts that
\begin{equation} \label{classicalstrichartz} \lVert u\rVert _{L^q(\mathbb{R}^{d+1} ) } \lesssim \left\lVert u\left( \cdot, 0\right)\right\rVert _{\dot{H}^s\left(\mathbb{R}^d \right)} + \left\lVert  u_t\left(\cdot, 0 \right) \right\rVert _{\dot{H}^{s-1}\left( \mathbb{R}^d \right)} \quad \text{ for } \quad \Delta u - u_{tt} = 0, \end{equation}
where $d \geq 2$, $s=1/2$ and $q= \frac{2(d+1)}{d-1}$ \cite{strichartz,keel}. The purpose of this work is to give a refined version of this inequality in certain cases, analogous to the refined Strichartz inequalities for the Schrödinger equation in \cite{li,du,du2}. The refined inequalities for the Schrödinger equation were used in \cite{li,zhang} to prove sharp results on the almost everywhere pointwise convergence of Schrödinger solutions to the initial data. They were also used in \cite{du,zhang} to improve the known results on Falconer's distance set problem in dimensions 3 and higher. This was done using Mattila's approach via the $L^2$ spherical averages of Fourier transforms of measures \cite{mattila}. Here the refined Strichartz inequality for the wave equation will be used to improve results on the averages over the cone, rather than the sphere. It is known \cite{wolff,erdogan2,cho} that the optimal decay of these conical averages is related to best possible $s$ in an inequality of the form \eqref{classicalstrichartz}, with the $L^q$ norm on the left replaced by an $L^2(\mu)$ norm for a fractal measure $\mu$. 

Fix $n \geq 1$. The Fourier transform of a compactly supported Borel measure $\mu$ on $\mathbb{R}^n$ is given by 
\[ \widehat{\mu}(\xi) := \int e^{-2\pi i \langle \xi, x \rangle } \, d\mu(x). \]
The measure $\mu$ is called $\alpha$-dimensional if 
\[ c_{\alpha}(\mu) := \sup_{\substack{x \in \mathbb{R}^n \\ r>0 }} \frac{ \mu\left(B\left(x,r\right)\right)}{r^{\alpha}} < \infty. \]
Given a smooth compact surface $\Gamma \subseteq \mathbb{R}^n$, let $\beta(\alpha,\Gamma)$ be the supremum over all $\beta \geq 0$ which satisfy 
\begin{equation} \label{coneaverages} \int \left\lvert  \widehat{\mu}(R \xi) \right\rvert^2 \, d\sigma_{\Gamma}(\xi) \lesssim \lVert \mu\rVert c_{\alpha}(\mu) R^{-\beta} \quad \text{ for all } R >0, \end{equation}
uniformly over all Borel measures $\mu$ with support in the unit ball of $\mathbb{R}^{n}$. Here $\lVert \mu\rVert $ is the total variation norm, the implicit constant is allowed to depend on $\beta$, and $\sigma_{\Gamma}$ is the surface measure on $\Gamma$. The distance set problem is related to the case where $\Gamma = S^{n-1}$ \cite{mattila,liu}, see also \cite{luca} for the connection between $\beta\big(\alpha, S^{n-1}\big)$ and the pointwise convergence of solutions to the wave and Schrödinger equations. Henceforth $\Gamma$ will denote the truncated cone
\[ \Gamma := \left\{ \left(\xi, \lvert \xi\rvert\right) \in \mathbb{R}^{d+1} : 1 \leq \lvert \xi\rvert \leq 2 \right\}, \quad n =d+1, \quad d \geq 1, \]
and since the constant factor in \eqref{coneaverages} is not important, it may be assumed that $\sigma_{\Gamma}$ is actually the pushforward of the $d$-dimensional Lebesgue measure under the map $\xi \mapsto \left( \xi, \left\lvert \xi\right\rvert \right)$. 

For $d \geq 3$, the best known lower bounds are
\begin{equation}  \label{cho} \beta\big(\alpha,\Gamma^d\big) \geq \left\lbrace
\begin{array}{lll} 
\alpha &\text{ if } 0 < \alpha \leq \frac{d-1}{2} & \text{(Mattila \cite{mattila}, =Rogers \cite{rogers})}\\
  \frac{d}{4} + \frac{\alpha}{2} - \frac{1}{4}  &\text{ if } \frac{d-1}{2} < \alpha \leq \frac{d+3}{2} & \text{(Cho, Ham and Lee \cite{cho})}\\
\alpha-1 &\text{ if } \frac{d+3}{2} < \alpha \leq d+1 & \text{(Sjölin \cite{sjolin}),} \end{array} \right. \end{equation}
and the best known upper bounds are
\begin{equation} \label{cho2} \beta\big(\alpha,\Gamma^d\big) \leq \left\lbrace
\begin{array}{lll}  \alpha &\text{ if }  0 < \alpha \leq d-2 &\\
  \frac{d}{2} + \frac{\alpha}{2} - 1  &\text{ if }  d-2 < \alpha \leq d  & \text{(Cho, Ham and Lee \cite{cho}).}\\
\alpha-1 &\text{ if } d \leq \alpha \leq d+1 & \end{array} \right. \end{equation}
When $d=3$, these bounds are equal and give the exact value of $\beta\big(\alpha,\Gamma^3\big)$. The exact value of $\beta\big(\alpha,\Gamma^2\big)$ was found by Erdoğan in \cite{erdogan2} (Sjölin's lower bound $\beta\big(\alpha,\Gamma^d\big) \geq \alpha-1$ is a straightforward consequence of Parseval and duality). In Section \ref{conicalaverages}, it will be shown that
\begin{equation} \label{newdecay} \beta\big(\alpha,\Gamma^d\big) \geq \alpha -1 + \frac{d-\alpha}{d+1}, \quad \alpha \in \left(0,d+1\right], \end{equation}
improving \eqref{cho} in the range $\frac{d+1}{2}+\frac{2}{d-1} < \alpha < d$, which is nonempty only for $d \geq 4$. 

As in \cite{du}, the inequality \eqref{newdecay} will be deduced from a linear refined Strichartz inequality for the wave equation, the proof of which is given in Section \ref{linearstrichartz}. The difference between the wave and Schrödinger case is that now the boxes at different scales in the theorem statement no longer intersect in a clean way; the novelty of this work consists in dealing with this issue. In Section \ref{multilinearstrichartz}, a multilinear refined Strichartz inequality will be deduced from the linear one, following \cite{du2}. By using the multilinear version, the bound \eqref{newdecay} could possibly be improved further with the techniques from \cite{zhang}; the known values of $\beta\big(\alpha,\Gamma^d\big)$ for the $d$-dimensional cone tend to mimic the known values of $\beta\big(\alpha, S^{d-1}\big)$ for the sphere of one dimension less. 

Using a counterexample, the upper bound 
\begin{equation} \label{newdecay2} \beta\big(\alpha,\Gamma^d\big) \leq \alpha -1 + \frac{2(d+1-\alpha)}{d+1}, \quad \alpha \in \left(0,d+1\right), \quad d \geq 5. \end{equation}
will also be proved in Section \ref{conicalaverages}, this is the analogy to the upper bound for the spherical averages, due to Luca and Rogers \cite{luca}. This improves \eqref{cho2} in the range $\frac{d+1}{2} < \alpha < d-\frac{4}{d-3}$, this interval being nonempty when $d \geq 6$.

For $q \in [1,\infty]$, let $s_d(\alpha,q)$ be the infimum over all $s$ satisfying the inequality
\begin{equation} \label{strichartz} \lVert u\rVert _{L^q(\mu)} \lesssim C(\mu) \left(\left\lVert u\left( \cdot, 0\right)\right\rVert _{H^s} + \left\lVert  u_t\left(\cdot, 0 \right) \right\rVert _{H^{s-1}} \right) \quad \text{ for } \quad \Delta u - u_{tt} = 0, \end{equation}
where $H^s$ refers to the inhomogeneous Sobolev space with norm
\[ \lVert f\rVert _{H^s} := \left( \int \left\lvert  \widehat{f}(\xi) \right\rvert^2 \left( 1+ \lvert \xi\rvert^2 \right)^s \, d\xi \right)^{1/2}, \]
and
\[C(\mu) := \begin{cases} \left\lVert \mu\right\rVert ^{\frac{1}{q}-\frac{1}{2}} c_{\alpha}(\mu)^{\frac{1}{2}} &\text{ if } 1 \leq q \leq 2 \\
c_{\alpha}(\mu)^{\frac{1}{q}} &\text{ if } 2 < q \leq \infty. \end{cases} \]
The quantity $s_d(\alpha,2)$ is related to $\beta\big(\alpha,\Gamma^d\big)$ through the equation 
\begin{equation} \label{beta-s} \beta\big(\alpha,\Gamma^d\big)= d-2s_d(\alpha,2), \end{equation}
see \cite{wolff,erdogan2,cho,rogers}; a proof is given here in Proposition~\ref{relationship}. The piecewise definition of $C(\mu)$ is what makes the proof of \eqref{beta-s} work, although for known counterexamples the choice of scaling does not seem to matter (see e.g.~\cite{rogers}). For $d \geq 3$, the corresponding best known bounds for $s_d(\alpha,q)$ are
\begin{equation} \label{qsmall} s(\alpha,2,d) \leq s_d(\alpha,q) \leq \widetilde{s}(\alpha,2,d) \quad \text{for $q \in [1,2]$,} \end{equation}
and
\begin{equation} \label{qlarge} s(\alpha,q,d) \leq s_d(\alpha,q) \leq \widetilde{s}(\alpha,q,d) \quad \text{for $q \in [2,\infty]$,} \end{equation}
where (for $d \geq 2$),
\[ s(\alpha,q,d) := \begin{cases} \max\left( \frac{d}{2}-\frac{\alpha}{q},  \frac{d+1}{4} \right) &\text{ if } 0 < \alpha \leq 1 \\
\max\left( \frac{d}{2}-\frac{\alpha}{q}, \frac{d+1}{4} + \frac{1-\alpha}{2q},  \frac{d+2-\alpha}{4} \right) &\text{ if } 1 < \alpha \leq d  \\
\max\left( \frac{d}{2}-\frac{\alpha}{q}, \frac{d+1}{4} + \frac{d+1-2\alpha}{2q},  \frac{d+1-\alpha}{2} \right) &\text{ if } d < \alpha \leq d+1, \end{cases}  \]
and (for $d \geq 3$),
\[ \widetilde{s}(\alpha,2,d) :=  \begin{cases} \frac{d-\alpha}{2} &\text{ if } 0 < \alpha \leq \frac{d-1}{2} \\
 \frac{3d+1}{8} - \frac{\alpha}{4} &\text{ if } \frac{d-1}{2} < \alpha \leq \frac{d+3}{2} \\
\frac{d+1-\alpha}{2}  &\text{ if } \frac{d+3}{2} < \alpha \leq d+1, \end{cases} \]
and for $q>2$, 
\[ \widetilde{s}(\alpha,q,d) :=  \begin{cases} \max\left( \frac{d}{2}-\frac{\alpha}{q},  \frac{d+1}{4}, \frac{3d+1}{8} - \frac{\alpha}{4} \right) &\text{ if } 0 < \alpha \leq 1 \\
\max\left( \frac{d}{2}-\frac{\alpha}{q}, \frac{d+1}{4} + \frac{1-\alpha}{2q}, \frac{3d+1}{8} - \frac{\alpha}{4} \right) &\text{ if } 1 < \alpha \leq d \\
\max\left( \frac{d}{2}-\frac{\alpha}{q}, \frac{d+1}{4} + \frac{d+1-2\alpha}{2q},  \frac{d+1-\alpha}{2} \right) &\text{ if } d < \alpha \leq d+1. \end{cases} \]
The piecewise intervals in $\widetilde{s}(\alpha,q,d)$ are only subdivided this way for simplicity, and for $d> 3$ could be made slightly more optimal as in $\widetilde{s}(\alpha,2,d)$. The lower bound in \eqref{qlarge} is from \cite{cho}, and the lower bound in \eqref{qsmall} is from \cite{rogers} (and works even if the constant $\lVert \mu\rVert ^{\frac{1}{q}-\frac{1}{2}}c_{\alpha}(\mu)^{\frac{1}{2}}$ is relaxed to $c_{\alpha}(\mu)^{\frac{1}{q}}$). The upper bound in \eqref{qsmall} and \eqref{qlarge} is from \cite{cho}. Earlier results were obtained by Oberlin in \cite{oberlin}. The inequalities \eqref{qsmall} and \eqref{qlarge} determine $s_3(\alpha,q) = s(\alpha,q,3)$ for all $q$. The equality $s_2(\alpha,q) = s(\alpha,q,2)$ holds for all $q$; this was determined for $\alpha \geq 1$ in \cite{erdogan2}, and for $\alpha \leq 1$ in \cite[Eq. 90]{wolff2} and \cite[Sect. 4]{rogers}.

The relation \eqref{beta-s} combined with the bound for $\beta\big(\alpha,\Gamma^d\big)$ in \eqref{newdecay} yields
\[ s_d(\alpha,2) \leq \frac{d}{2} - \frac{1}{2} \left(\alpha-1+ \frac{d-\alpha}{d+1} \right). \]
The range of improvement is the same as for $\beta\big(\alpha,\Gamma^d\big)$, given by $\frac{d+1}{2}+\frac{2}{d-1} < \alpha < d$. There would also be some small improvement for slightly larger $q$ by interpolation of $H^s$ spaces. For $q \geq 4$ the known inequalities are already optimal, since the lower and upper bounds in \eqref{qlarge} are the same. 

The upper bound in \eqref{newdecay2} gives 
\[ s_d(\alpha,2) \geq \frac{d}{2} - \frac{1}{2} \left(  \alpha -1 + \frac{2(d+1-\alpha)}{d+1} \right), \quad \alpha \in \left(0,d+1\right), \quad d \geq 5, \]
which improves the lower bound in \eqref{qsmall} for $q=2$, $d \geq 6$ in the range $\frac{d+1}{2} < \alpha < d-\frac{4}{d-3}$. This shows that $s(\alpha,q,d)$ is not the optimal value for all $d$, which disproves a conjecture from \cite{cho}.

The Strichartz inequality \eqref{strichartz} was also considered in \cite{rogers} for a restricted class of measures of the form $\nu = \mu \otimes \lambda$, where $\mu$ is a compactly supported $\alpha$-dimensional measure in the unit ball of $\mathbb{R}^d$ and $d\lambda = \chi_{[0,1]} \, dm$, where $m$ is the Lebesgue measure on $\mathbb{R}$. For this restricted class, the infimum over $s$ in \eqref{strichartz} was shown in \cite{rogers} to be related to Falconer's distance set problem. Unlike in \eqref{beta-s}, the optimal value of $s$ has no relationship to the optimal $L^2$ decay of conical averages, since for this restricted class of measures the (very strong) decay
\[ \int \left\lvert  \widehat{\nu}(R \xi) \right\rvert^2 \, d\sigma_{\Gamma}(\xi) \lesssim I_{\alpha}(\mu) R^{-(\alpha+2)} \quad \text{ for all } R >0, \]
follows directly from a simple computation (Proposition~\ref{trivial}) involving the characterisation of the energy $I_{\alpha}(\mu)$ as
\begin{equation} \label{energydef} I_{\alpha}(\mu) := \int \lvert x-y\rvert^{-\alpha} \, d\mu(x) \, d\mu(y) =c_{\alpha,d} \int \lvert \xi\rvert^{\alpha-d} \left\lvert \widehat{\mu}(\xi)\right\rvert^2 \, d\xi, \quad 0 < \alpha < d. \end{equation}

\subsection{Notation} 

Throughout, let $A: \mathbb{R}^{d+1} \to \mathbb{R}^{d+1}$ be the unitary operator defined through the standard basis by
\begin{equation} \label{unitary} e_i \mapsto e_i \quad \text{for} \quad 1 \leq i \leq d-1, \quad \frac{ e_{d+1}+e_d}{\sqrt{2}} \mapsto e_d, \quad \frac{ e_{d+1}-e_d}{\sqrt{2}} \mapsto e_{d+1}, \end{equation}
and let $E$ be the extension operator for the truncated cone, defined by
\[ Ef(x,t) = \int_{ B(0,2) \setminus B(0,1)}  e^{2\pi i(\langle \xi, x \rangle + \lvert \xi\rvert t)} f(\xi) \, d\xi. \] 

Let $m$ be the Lebesgue measure on Euclidean space. When used as a function the symbol $\pi$ will mean the natural projection from $\mathbb{R}^{d+1}$ to $\mathbb{R}^d$.


For non-negative $X$ and $Y$ the notation $X \lessapprox Y$ will mean $X \leq C_{\epsilon} R^{\epsilon} Y$ where $\epsilon$ is arbitrarily small. Similarly $X \approx Y$ means $X \lessapprox Y$ and $Y \lessapprox X$. For any box $X$ and constant $C \geq 0$, the set $CX$ will the box with the same centre, but with side lengths scaled by $C$.

\section{Modified wave packet decomposition for the cone}
The wave packet decomposition used here is based on those in \cite{guth,cho,li,shayya,ou}. In the decomposition, the truncated cone is partitioned into caps; to construct these, partition the $(d-1)$-dimensional sphere $S^{d-1}$ into spherical caps $C$ of diameter $\delta^{1/2}$, where $\delta \in (0,1)$, and define each $\tau=\tau_C$ by
\[ \tau = \left\{ \left(\xi, \lvert \xi\rvert\right) \in \Gamma: \frac{ \xi}{\lvert \xi\rvert} \in C \right\}. \]
The set $\tau$ is called a cap at scale $\delta^{1/2}$. It is contained in a box of dimensions
\[ \sim \delta^{1/2} \times \dotsm \times \delta^{1/2} \times 1 \times \delta, \]
where the second last coordinate refers to the flat direction in $\tau$, and the last direction is normal to the cone at $\tau$. The normal to $\tau$ is defined to be the normal to the cone at the barycentre of $\tau$ with respect to $\sigma_{\Gamma}$. Similarly, the flat direction of $\tau$ is the barycentre of $C$ (in either case the use of the barycentre is not important, any other point in $\tau$ or $C$ would work).

The dimensions of the boxes in the following wave packet decomposition are adapted to a Lorentz rescaling argument used later, which causes them to differ from the usual case in which the boxes in frequency space are dual to those in physical space (see e.g. \cite{ou}).

\begin{proposition} \label{primitivewave} Fix $\delta>0$. Let $B$ be a box in $\mathbb{R}^d$ of dimensions 
\[ R^{-1/4} \times \dotsm \times R^{-1/4} \times 1, \]
with sides parallel to the coordinate axes, and let $\mathcal{D}$ be a collection of boxes $D \subseteq \mathbb{R}^d$ of dimensions
\[ R^{3/4+ \delta} \times \dotsm \times R^{3/4+\delta} \times R^{1/2+\delta}, \]
also with sides parallel to the coordinate axes, such that the boxes $(1/2)D$ form a finitely overlapping cover of $\mathbb{R}^d$. Suppose that $\Phi \in C^{\infty}((4/3) B, \mathbb{R})$ satisfies
\begin{equation} \label{derivative} \text{$\lvert \partial_i^k \Phi \rvert \lesssim_k R^{3k/4}$ for $1 \leq i \leq d-1$}  \quad \text{and} \quad \text{$\lvert \partial_d^k \Phi \rvert \lesssim_k R^{k/2}$ for all $k \geq 1$.}  \end{equation}
Then for sufficiently large $R$, any $f \in L^2(\mathbb{R}^d)$ supported in $B$ can be decomposed as $f = \sum_D f_D$, such that:
\begin{enumerate} \item Each $f_D$ is supported in $(4/3)B$.
\item The $f_D$'s satisfy the Bessel-type inequality
 \begin{equation} \label{essorthogonality} \sum_D \left\lVert  f_D \right\rVert _2^2 \lesssim \lVert f\rVert _2^2. \end{equation}
\item Each $\widecheck{\Phi f_D}$ is essentially supported in $D$, meaning that for any $N>0$ and $z \notin D$,
 \[ \left\lvert \widecheck{ \Phi f_D}(z)\right\rvert \lesssim_N R^{-N} \lVert f\rVert _2. \] 
\item For any $N>0$, \[ \sum_{D: z \notin D} \left\lvert \widecheck{ \Phi f_D}(z)\right\rvert \lesssim_N R^{-N} \lVert f\rVert _2 \quad \text{for any $z \in \mathbb{R}^d$.} \] \end{enumerate}
\end{proposition}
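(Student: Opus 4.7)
The plan is to combine a smooth physical-side partition of unity with the multiplier $\Phi$. Let $\{\psi_D\}_{D \in \mathcal{D}}$ be a smooth POU with $\sum_D \psi_D = 1$, each $\psi_D$ supported in $(1/2)D$ and satisfying derivative bounds $|\partial_i^k \psi_D| \lesssim_k R^{-(3/4+\delta)k}$ for $1 \leq i \leq d-1$ and $|\partial_d^k \psi_D| \lesssim_k R^{-(1/2+\delta)k}$. Setting $g := \widecheck{\Phi f}$ (so $\widehat{g} = \Phi f$), this yields a physical-side decomposition $g = \sum_D \psi_D g$ with each $\psi_D g$ supported in $(1/2)D \subseteq D$. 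To pull it back to $f$, I would define
\[ f_D := \eta\,\overline{\Phi}\,\widehat{\psi_D g} \;=\; \eta\,\overline{\Phi}\,\bigl(\widehat{\psi_D} * (\Phi f)\bigr), \]
where $\eta \in C_c^\infty((4/3)B)$ equals $1$ on $B$. This formula implicitly uses $|\Phi| \equiv 1$; in the intended applications $\Phi$ is a phase factor of modulus one, but if only $|\Phi| \gtrsim 1$ then $\overline{\Phi}$ is replaced by $\overline{\Phi}/|\Phi|^2$.

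Next I would check the three properties. Support of $f_D$ in $(4/3)B$ is immediate from $\eta$. Using linearity of the Fourier transform and $\widehat{g} = \Phi f$,
\[ \sum_D f_D \;=\; \eta\overline{\Phi}\sum_D \widehat{\psi_D g} \;=\; \eta\overline{\Phi}\,\widehat{g} \;=\; \eta\overline{\Phi}\Phi f \;=\; \eta f \;=\; f. \]
For the essential support, $\Phi f_D = \eta\widehat{\psi_D g}$, so $\widecheck{\Phi f_D} = \widecheck{\eta} * (\psi_D g)$. The function $\widecheck{\eta}$ is Schwartz, decaying rapidly outside a region of dimensions $R^{1/4} \times \dotsm \times R^{1/4} \times 1$ about the origin (dual to $(4/3)B$), which is much smaller than the corresponding dimensions of $D$. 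So for $z \notin D$ the distance from $z$ to $\mathrm{supp}(\psi_D g) \subseteq (1/2)D$ exceeds $\tfrac{1}{4}R^{3/4+\delta}$ or $\tfrac{1}{4}R^{1/2+\delta}$ in some coordinate, forcing $|\widecheck{\Phi f_D}(z)| \lesssim_N R^{-N}\|f\|_2$; bounded overlap of $\mathcal{D}$ preserves this rapid decay in the pointwise sum. Almost-orthogonality is Plancherel together with $\sum_D \psi_D^2 \leq \sum_D \psi_D = 1$:
\[ \sum_D \|f_D\|_2^2 \;\leq\; \sum_D \|\widehat{\psi_D g}\|_2^2 \;=\; \sum_D \|\psi_D g\|_2^2 \;\leq\; \|g\|_2^2 \;=\; \|f\|_2^2. \]

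The hard part is the Schwartz-tail bookkeeping that turns \emph{essentially supported in $D$} into the required $R^{-N}$ error after summing over $\mathcal{D}$. The calibration underlying the whole construction is that each dimension of $D$ exceeds the corresponding dimension of the dual box of $(4/3)B$ by a factor of $R^{1/2+\delta}$—precisely enough to absorb both the Schwartz tails of $\widecheck{\eta}$ and the physical-space spread induced by the oscillation of $\Phi$ (whose derivative bounds place its stationary-phase contribution in a region of the same shape as $D$). Matching the $D$'s to this spread is what makes the decomposition close up, and the bulk of the technical work is checking that $\widehat{\psi_D}$ has enough Schwartz decay relative to these calibrations.
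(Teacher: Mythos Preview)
Your construction is correct under the extra assumption $|\Phi|\equiv 1$, but it differs from the paper's in a way that matters downstream. The paper sets $f_D=\psi\,(\widehat{\phi_D}*f)$, where $\psi$ is a cutoff equal to $1$ on $B$ and supported in $(5/4)B$, and $\{\phi_D\}$ is a smooth partition of unity subordinate to the $(3/4)D$. Then $\widecheck{\Phi f_D}=\widecheck{w}*(\phi_D\widecheck f)$ with $w=\Phi\psi$, and the derivative hypotheses \eqref{derivative}, combined with the analogous bounds on $\psi$, give $\widecheck w$ rapid decay outside a box of dimensions $\sim R^{3/4}\times\dotsm\times R^{3/4}\times R^{1/2}$ by integration by parts. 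Two features of this design are worth highlighting against yours. First, no lower bound on $|\Phi|$ is used, so the proposition is proved as stated. Second, and more consequentially for Proposition~\ref{wavepacket}, the pieces $f_D$ do not depend on $\Phi$: the \emph{same} decomposition $f=\sum_D f_D$ works simultaneously for every $\Phi$ obeying \eqref{derivative}. In that application one takes $\Phi=\Phi_t=e^{2\pi i h(\cdot)t}$ and needs a single $t$-independent splitting such that $\widecheck{\Phi_t f_D}$ is localised to $D$ uniformly for $|t|\le R$; your $f_D=\eta\,\overline\Phi\,(\widehat{\psi_D}*(\Phi f))$ varies with $\Phi$, hence with $t$, and so does not directly deliver this. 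Relatedly, your argument never actually invokes the derivative bounds on $\Phi$---all the localisation comes from $\widecheck\eta$---so the remark in your final paragraph about the spread induced by the oscillation of $\Phi$ is not reflected in the proof you wrote.
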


\begin{proof} By rescaling a smooth bump function on the unit cube, there exists a smooth function $\psi$ with $0 \leq \psi \leq 1$, such that $\psi$ is equal to 1 on $B$, vanishes outside $(5/4)B$ and satisfies
\begin{equation} \label{scaling} \text{$\left\lvert  \partial_i^k \psi \right\rvert \lesssim_k R^{k/4}$ for $1 \leq i \leq d-1$} \quad \text{and} \quad \text{$\left\lvert  \partial_d^k \psi \right\rvert \lesssim_k 1$ for $k \geq 1$.} \end{equation}
Let $\{\phi_D\}_D$ be a smooth partition of unity subordinate to the cover of $\mathbb{R}^d$ by the sets $(3/4)D$. Then
\[ f= \psi f = \sum_D \psi\left(\widehat{\phi_D} \ast f\right) =: \sum_D f_D, \]
where the convergence holds in $L^2(\mathbb{R}^d)$ by Plancherel, and each $f_D$ is supported in $(4/3)B$ by the support condition on $\psi$. By Plancherel, the $L^2$ norm of $f$ satisfies
\[ \lVert f\rVert _2^2 = \sum_{i,j}  \left\langle \phi_{D_i} \widecheck{f} ,\phi_{D_j} \widecheck{f} \right\rangle \\
\geq \sum_D \left\lVert  \phi_{D} \widecheck{f} \right\rVert _2^2 = \sum_D \left\lVert  \widehat{\phi_{D}} \ast f \right\rVert _2^2 \gtrsim \sum_D \left\lVert  f_D \right\rVert _2^2, \]
which proves \eqref{essorthogonality}. 

It remains to check the support conditions on each $\widecheck{\Phi f_D}$. Let $w = \Phi \psi$, so that by \eqref{derivative} and \eqref{scaling},
\[ \text{$\left\lvert \partial_i^k w \right\rvert \lesssim_k R^{3k/4}$ for $1 \leq i \leq d-1$} \quad \text{and} \quad \text{$\left\lvert \partial_d^k w\right\rvert \lesssim_k R^{k/2}$ for $k \geq 1$.} \] By integrating by parts $k$ times, 
\[ \text{$\widecheck{w}(z) \lesssim_k \frac{m(B)}{(\lvert z_i\rvert R^{-3/4} )^k}$ for $1 \leq i \leq d-1$} \quad \text{and} \quad \text{$\widecheck{w}(z) \lesssim_k \frac{m(B)}{(\lvert z_d\rvert R^{-1/2} )^k}$ for $z \in \mathbb{R}^d$.} \]
Hence for $z \notin D$, 
\begin{align*} \left\lvert \widecheck{ \Phi f_D}(z) \right\rvert &= \left\lvert \widecheck{ w} \ast \left(\phi_D \widecheck{f}\right)(z)\right\rvert  \\
&\leq \lVert f\rVert _1 \int_{(3/4)D}  \left\lvert \widecheck{w}(z-y)\right\rvert \phi_D(y)  \, dy \\
&\lesssim_N R^{-N} \lVert f\rVert _2,\end{align*}
for any $N$, by taking $k$ large enough depending on $\delta$. This shows that $\widecheck{ \Phi f_D}$ is essentially supported in $D$. Similarly, since the cover is finitely overlapping, applying similar working and summing a geometric series gives
\[ \sum_{D: z \notin D} \left\lvert \widecheck{ \Phi f_D}(z)\right\rvert \lesssim_N R^{-N} \lVert f\rVert _2 \quad \text{for any $z \in \mathbb{R}^d$,} \]
which proves the proposition. \end{proof}

\begin{proposition} \label{wavepacket} Let $\tau$ be a cap in $\Gamma^d$ at scale $R^{-1/4}$. Then any $f \in L^2(\pi(\tau))$ can be decomposed as $f= \sum_T f_T$ with
\begin{equation} \label{essorthogonality2} \sum_T \left\lVert  f_T \right\rVert _2^2 \lesssim \lVert f\rVert _2^2, \end{equation}
such that each $f_T$ has support in $(4/3) \pi(\tau)$, where the $T$'s are boxes with dimensions
\[ \sim R^{3/4+\delta} \times \dotsm \times R^{3/4+ \delta} \times R^{1/2+\delta} \times R, \]
with long axis normal to $\tau$ and short axis in the flat direction of $\tau$, and such that
\begin{equation} \label{esssupport} \sum_{T: (x,t) \notin T} \left\lvert  Ef_T(x,t) \right\rvert \lesssim_N R^{-N} \lVert f\rVert _2 \quad \text{for} \quad \lvert (x,t)\rvert \leq R. \end{equation}
\end{proposition}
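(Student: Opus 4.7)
The plan is to reduce Proposition \ref{wavepacket} to Proposition \ref{primitivewave} by coordinate changes in both space--time and frequency. After rotating in $\mathbb{R}^d$ (which commutes with $E$) so that $y_{\tau}=e_d$, the projection $\pi(\tau)$ lies in an axis-aligned rectangle of dimensions $\sim R^{-1/4}\times\dotsm\times R^{-1/4}\times 1$. Introducing rotated space--time coordinates $(X,T):=A(x,t)$ and new frequency variables $\tilde\xi:=\sigma(\xi)$ defined by $\sigma_i(\xi)=\xi_i$ for $i<d$ and $\sigma_d(\xi)=(\xi_d+|\xi|)/\sqrt{2}$, the identities $x_d=(X_d-T)/\sqrt{2}$ and $t=(X_d+T)/\sqrt{2}$ yield, after absorbing the Jacobian of $\sigma^{-1}$ into $\tilde f$,
\[
Ef(x,t)=\widecheck{\Phi_T\,\tilde f}(X),\qquad \Phi_T(\tilde\xi):=e^{2\pi iT\phi(\tilde\xi)},\qquad \phi(\tilde\xi):=\frac{\tilde\xi_1^2+\dotsm+\tilde\xi_{d-1}^2}{2\tilde\xi_d},
\]
with $\tilde f$ supported in $\sigma(\pi(\tau))$, which sits inside an axis-aligned box $B$ of the required shape $\sim R^{-1/4}\times\dotsm\times R^{-1/4}\times 1$.

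Next I would verify that $\Phi_T$ satisfies the derivative bounds \eqref{derivative} uniformly for $|T|\le R$. On $B$ one has $\tilde\xi_d\sim 1$ and $\tilde\xi_i=O(R^{-1/4})$ for $i<d$, so the explicit formula gives $\phi=O(R^{-1/2})$, $\partial_{\tilde\xi_i}\phi=O(R^{-1/4})$, $\partial_{\tilde\xi_i}^k\phi=O(1)$ for $k\ge 2$ and $i<d$, while $\partial_{\tilde\xi_d}^k\phi=O(R^{-1/2})$ for every $k\ge 1$. Combining these with $|T|\le R$ via the Fa\`a di Bruno formula applied to the exponential yields exactly $|\partial_{\tilde\xi_i}^k\Phi_T|\lesssim R^{3k/4}$ for $i<d$ and $|\partial_{\tilde\xi_d}^k\Phi_T|\lesssim R^{k/2}$.

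The crucial observation, read off from the proof of Proposition \ref{primitivewave}, is that the decomposition $\tilde f=\sum_D\tilde f_D$ depends only on the partition $\mathcal D$ and not on the phase. I would therefore fix $\mathcal D$ to be a partition of $\mathbb R^d$ into axis-aligned boxes of dimensions $R^{3/4+\delta}\times\dotsm\times R^{3/4+\delta}\times R^{1/2+\delta}$, apply Proposition \ref{primitivewave} once, and note that the essential-support conclusion $\sum_{D:X\notin D}|\widecheck{\Phi_T\tilde f_D}(X)|\lesssim R^{-N}\|\tilde f\|_2$ then holds simultaneously for every $T\in[-R,R]$. Translating back via $A^{-1}$, each $Ef_D(x,t)$ is essentially supported in $A^{-1}(D\times[-R,R])$, a box of the prescribed dimensions $R^{3/4+\delta}\times\dotsm\times R^{3/4+\delta}\times R^{1/2+\delta}\times R$ with long axis $A^{-1}(e_{d+1})$, which is precisely the normal to the cone at the barycentre of $\tau$. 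Pulling $\tilde f_D$ back through $\sigma$ produces $f_T$; the support condition is preserved up to mild constants (because $\sigma$ is a bi-Lipschitz diffeomorphism with Jacobian bounded above and below by absolute constants) and \eqref{essorthogonality} converts directly into \eqref{essorthogonality2}.

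The main obstacle I expect is the derivative-bound computation: the non-standard scalings $R^{3/4}$ and $R^{1/2}$ in \eqref{derivative} are dictated precisely by the geometry of the cap, and the gain of $R^{1/2}$ rather than $R$ in the flat direction stems entirely from the cancellation encoded in $\phi=O(R^{-1/2})$---a naive application of $\Phi_t(\xi)=e^{2\pi it|\xi|}$ in the original coordinates $(x,t)$ would give only $R^1$ in the flat direction and yield suboptimal tube thickness. Everything else---tracking $\sigma$ through the support claim and the $L^2$ orthogonality---is routine once this cancellation has been identified and the rotation $A$ has been used to align the cone normal with a coordinate axis.
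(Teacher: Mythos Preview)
Your proposal is correct and follows essentially the same route as the paper: after rotating so that $y_\tau=e_d$ and applying $A$ to align the cone normal with $e_{d+1}$, the extension operator becomes $\widecheck{\Phi_T\tilde f}$ for the same phase $\phi(\omega)=h(\omega)=\frac{\omega_1^2+\dotsm+\omega_{d-1}^2}{2\omega_d}$, the derivative bounds \eqref{derivative} are verified exactly as you describe, and Proposition~\ref{primitivewave} is applied for each fixed $T$ with $|T|\le R$ using the observation that the decomposition $\tilde f=\sum_D\tilde f_D$ is independent of the phase. The only cosmetic difference is that the paper works directly in the rotated coordinates rather than introducing the explicit frequency diffeomorphism $\sigma$ and pulling back at the end; the content is identical.
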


\begin{proof} Since the condition $\lvert (x,t)\rvert \leq R$ is rotation invariant, after applying a rotation of $\mathbb{R}^{d+1}$ which fixes the cone it may be assumed that the flat direction of $\tau$ is $\frac{e_{d+1}+e_d}{\sqrt{2}}$. After then applying the unitary $A$ (defined in \eqref{unitary}), it may be assumed that the normal to $\tau$ is completely in the $e_{d+1}$-direction, and that $(4/3)\tau$ is contained in the graph of the function
\[ h(\omega_1, \dotsc, \omega_{d}) := \frac{ \sum_{i=1}^{d-1}  \omega_i^2}{2 \omega_d}, \]
restricted to $(4/3)B$ where
\[ B:= \left[ -R^{-1/4}, R^{-1/4} \right] \times \dotsm \times \left[ -R^{-1/4}, R^{-1/4} \right] \times \left[ \sqrt{2}, 2 \sqrt{2} \right] \subseteq \mathbb{R}^d. \]
The extension operator corresponding to $h$ will still be denoted by $E$. Let $f_T = f_D$ from Proposition~\ref{primitivewave} where each $T= D \times \mathbb{R}$. The inequality \eqref{essorthogonality2} and the support condition on the $f_T$'s both follow from Proposition~\ref{primitivewave}. Moreover,
\[ Ef_T(x,t) =  \int e^{2\pi i \left( \langle \omega, x \rangle + h(\omega) t \right) } f_T(\omega) \, d\omega = \widecheck{ \Phi_t f_D }(x), \]
where $\Phi_t(\omega) := e^{2\pi ih(\omega)t}$ restricted to $(4/3) B$, and from the definition of $h$,
\[ \lvert \partial_i^k \Phi_t \rvert(\omega) \lesssim_k R^{3k/4} \quad \text{for} \quad 1 \leq i \leq d-1 \quad \text{and} \quad \lvert \partial_d^k \Phi_t \rvert(\omega) \lesssim_k R^{k/2}, \]
for all $k \geq 1$, uniformly for $\lvert t\rvert \leq R$ and $\omega \in (4/3)B$. Inequality \eqref{esssupport} then follows from Proposition~\ref{primitivewave}. This finishes the proof.
\end{proof}

\section{Linear refined Strichartz inequality} \label{linearstrichartz}
Most of the notation used throughout this section will be similar to that in \cite{li}, to emphasise the analogy with the Schrödinger case. The proof of the linear refined Strichartz inequality will use the following (sharp) decoupling theorem for the truncated cone from \cite{bourgain}. 
\begin{theorem}[{\cite[Theorem~1.2]{bourgain}}]  \label{decoupling} Let $q = \frac{2(d+1)}{d-1}$ and $\delta \in (0,1)$, partition $\Gamma^d$ into caps $\tau$ at scale $\delta^{1/2}$, and let $F = \sum_{\tau} F_{\tau}$ be a function on $\mathbb{R}^{d+1}$ such that for each $\tau$ the support of $\widehat{F_{\tau}}$ is contained in the $\delta$-neighbourhood of $\tau$. Then for any $\epsilon >0$,
\[ \lVert F\rVert _q \leq C_{\epsilon} \delta^{-\epsilon} \left(\sum_{\tau } \lVert F_{\tau}\rVert _q^2 \right)^{1/2}. \] \end{theorem}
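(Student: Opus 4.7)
The plan is to prove this by the Bourgain--Demeter method: induction on scales driven by a multilinear Kakeya input and a Lorentz-type rescaling of the cone. Let $D(\delta)$ denote the best constant in the inequality; the goal is $D(\delta)\lesssim_\epsilon \delta^{-\epsilon}$, which on iteration reduces to producing any strict power improvement over the trivial $\delta^{-(d-1)/4}$ bound from Cauchy--Schwarz.

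First I would perform a Bourgain--Guth broad/narrow decomposition. Partition the caps $\tau$ by their position on the base sphere $S^{d-1}$ into angular sectors of size $K^{-1}$, for some large parameter $K$. At each point $x \in \mathbb{R}^{d+1}$, either one sector dominates (the narrow case) or the $L^q$-mass is distributed across $k$ quantitatively transversal sectors (the broad case). In the broad case one establishes a $k$-linear decoupling estimate
\[ \Bigl\| \prod_{i=1}^k |f_{\tau_i}|^{1/k} \Bigr\|_{L^q(B_R)} \lesssim R^\epsilon \prod_{i=1}^k \Bigl( \sum_{\tau' \subset \tau_i} \|f_{\tau'}\|_q^2 \Bigr)^{1/(2k)} \]
via a wave-packet decomposition as in Proposition~\ref{wavepacket} together with the Bennett--Carbery--Tao multilinear Kakeya inequality applied to the normals of the cone at the transversal caps; since cone caps have one flat direction, the packets are plates rather than tubes, so transversality must be checked after projecting out the flat directions. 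In the narrow case, a Lorentz boost in the $(\xi_d,\xi_{d+1})$-plane composed with an anisotropic dilation maps a $K^{-1}$-cap with its $\delta$-neighbourhood onto the full truncated cone with $(K^2\delta)$-neighbourhood, reducing that case to $D(K^2\delta)$. Combining the two cases produces a recursive inequality of the shape $D(\delta) \le C K^{\alpha} + C K^{-\beta} D(K^2\delta)$, which on iteration over $\sim \log(1/\delta)/\log K$ scales, with $K = \delta^{-\eta(\epsilon)}$ for small $\eta$, yields $D(\delta)\lesssim_\epsilon \delta^{-\epsilon}$.

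The main obstacle is the multilinear Kakeya step: one must verify that the plates produced by the wave-packet decomposition of quantitatively transversal cone caps satisfy the hypothesis of a cylindrical version of the Bennett--Carbery--Tao inequality, which is nontrivial because of the flat direction along each cap. A secondary subtlety is that multilinear Kakeya is naturally $\ell^\infty$ in $\tau$ while the decoupling statement is $\ell^2$; bridging this gap requires interpolating the multilinear estimate against the trivial $\ell^2$-$L^2$ decoupling (Plancherel) and the easy $\ell^q$-$L^\infty$ decoupling, so that the correct $\ell^2$-$L^q$ statement falls out at the sharp cone exponent $q = 2(d+1)/(d-1)$.
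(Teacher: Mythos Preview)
The paper does not prove this theorem at all: Theorem~\ref{decoupling} is quoted verbatim as \cite[Theorem~1.2]{bourgain} and used as a black box in the proof of Theorem~\ref{strichartzr}. So there is nothing to compare your argument against in this paper.

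That said, your sketch is a fair summary of the Bourgain--Demeter strategy and would, if carried out, give the result. A few comments on where the real work lies. Your ``main obstacle'' is well identified but slightly misdiagnosed: the way Bourgain--Demeter handle the flat direction is not by proving a cylindrical multilinear Kakeya for plates, but by first localising to balls of radius $\delta^{-1/2}$, on which by the uncertainty principle each $f_\tau$ is essentially constant along the flat direction of $\tau$; this reduces the cone decoupling at scale $\delta$ to paraboloid decoupling in $\mathbb{R}^d$ (one dimension down) at the same scale, and it is there that the genuine multilinear Kakeya for tubes is applied. Attempting a direct plate-Kakeya is possible but needlessly harder. Your ``secondary subtlety'' about going from $\ell^\infty$ multilinear to $\ell^2$ linear is also correct, and in practice is handled by interpolating the multilinear estimate with trivial $L^2$ orthogonality inside a parabolic-rescaling/induction-on-scales loop, exactly as you describe; this is the heart of the $\epsilon$-removal and is where the specific exponent $q=\frac{2(d+1)}{d-1}$ emerges.
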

The next theorem is the main result of this section. By duality, pigeonholing, the uncertainty principle and by Hölder's inequality, this theorem will imply an $L^2$-Strichartz inequality for fractal measures (see Corollary~\ref{averages} and Proposition~\ref{relationship}). 
\begin{theorem} \label{strichartzr} Fix $d \geq 2$, let $\gamma = \frac{1}{2} - \frac{1}{q}$ and $q  = \frac{2(d+1)}{d-1}$. Suppose that $f \in L^2(\mathbb{R}^d)$ is supported in $\{ \xi_d \geq 0\} \cap B(0,2) \setminus B(0,1)$, and let $Y= \bigcup Q$ be a collection of $R^{1/2} A^*\mathbb{Z}^{d+1}$-lattice cubes inside a ball of radius $R$. If $\lVert Ef\rVert _{L^q(Q)}$ is constant up to a factor of 2 as $Q$ varies over $Y$, and if the cubes are arranged in slabs of the form $A^*\left(\mathbb{R}^{d} \times [R^{1/2} j, R^{1/2} j + R^{1/2}]\right)$ with $j \in \mathbb{Z}$, such that each slab intersecting $Y$ intersects $\sim \sigma$ cubes in $Y$, then for any $\epsilon >0$,
\[ \lVert  Ef\rVert _{L^{q}(Y)} \leq C_{\epsilon}  R^{\epsilon} \sigma^{-\gamma}\lVert f\rVert _2. \]
\end{theorem}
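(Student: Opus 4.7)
The plan is to follow the Du--Guth--Li approach to refined Strichartz estimates for the Schr\"odinger equation, with the paraboloid replaced by the cone and the parabolic rescaling replaced by the anisotropic rescaling adapted to the wave packet dimensions produced by Proposition~\ref{wavepacket}. The proof combines three ingredients: the sharp cone decoupling of Theorem~\ref{decoupling}, the wave packet decomposition of Proposition~\ref{wavepacket}, and a tube-slab incidence bound that exploits the slab hypothesis on~$Y$.

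First, I would apply Theorem~\ref{decoupling} at scale $\delta = R^{-1/2}$ on each $R^{1/2}$-cube $Q \subseteq Y$. Writing $f = \sum_\tau f_\tau$ with $\tau$ ranging over caps of scale $R^{-1/4}$, decoupling yields $\|Ef\|_{L^q(Q)} \lessapprox (\sum_\tau \|Ef_\tau\|_{L^q(w_Q)}^2)^{1/2}$. Using the dyadic constancy of $\|Ef\|_{L^q(Q)}$ in $Q \in Y$, summing over $Q \in Y$ and applying Minkowski's inequality (which applies because $q/2 \geq 1$) reduces the task to bounding $(\sum_\tau \|Ef_\tau\|_{L^q(Y)}^2)^{1/2}$.

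Next, apply Proposition~\ref{wavepacket} to each $\tau$, writing $f_\tau = \sum_T f_{\tau, T}$ as a sum of wave packets along tubes of dimensions $(R^{3/4+\delta})^{d-1} \times R^{1/2+\delta} \times R$ with long axis normal to $\tau$. Two geometric facts drive the argument. First, since the cube side $R^{1/2}$ is no larger than either tube cross-section scale, each cube meets only $O(1)$ tubes per cap. Second, the long axis of every tube, being normal to its cap in $\Gamma_+$, makes an angle $\gtrsim 1$ with the slab direction $A^*(\mathbb{R}^d \times \{0\})$ by the construction of $A$ in Subsection~\ref{notation}. Consequently each tube traverses $\lesssim R^{1/2}$ slabs, and by the slab hypothesis meets at most $\sigma$ cubes of $Y$ per slab, for $\lesssim R^{1/2}\sigma$ cubes of $Y$ in total.

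After dyadic pigeonholing so that $\|f_{\tau,T}\|_2$, the number of cubes assigned to each tube, and each wave packet contribution $\|Ef_{\tau,T}\|_{L^q(Q)}$ are essentially constant on the surviving subsets, I would combine the $L^2$ almost-orthogonality $\sum_{\tau,T}\|f_{\tau,T}\|_2^2 \lesssim \|f\|_2^2$, the above tube-cube incidence bound, and the Cauchy-Schwarz estimate $\|Ef_{\tau,T}\|_\infty \lesssim R^{-(d-1)/8}\|f_{\tau,T}\|_2$ in order to assemble the conclusion. The main obstacle is carefully tracking the pigeonhole losses (all of which must be absorbed into the $R^\epsilon$) and verifying that the $L^\infty$-$L^2$-$L^q$ interpolation, together with the specific wave packet dimensions and the cone Strichartz exponent $q = 2(d+1)/(d-1)$, yields precisely the factor $\sigma^{-\gamma}$ with $\gamma = \tfrac12 - \tfrac1q = \tfrac1{d+1}$ rather than a weaker or wrongly-signed power of $\sigma$, or leaving a residual factor of the total count $M = \#Y$.
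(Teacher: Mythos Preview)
Your outline omits the essential mechanism that produces the $\sigma^{-\gamma}$ gain: induction on scales via Lorentz rescaling. The ingredients you list (decoupling at scale $R^{-1/2}$, wave packet decomposition at scale $R^{-1/4}$, the incidence bound ``each tube meets $\lesssim R^{1/2}\sigma$ cubes of $Y$'', and the Cauchy--Schwarz bound $\|Ef_{\tau,T}\|_\infty \lesssim R^{-(d-1)/8}\|f_{\tau,T}\|_2$) do not combine to give a negative power of $\sigma$. Indeed, after your Minkowski step one must control $\sum_\tau \|Ef_\tau\|_{L^q(Y)}^2$; for a fixed $\tau$ each cube sees $O(1)$ tubes, so $\|Ef_\tau\|_{L^q(Y)}^q \approx \sum_T \|Ef_{\tau,T}\|_{L^q(Y\cap T)}^q$. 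If you now feed in the $L^\infty$ bound on each of the $\lesssim R^{1/2}\sigma$ cubes in $T\cap Y$, the resulting estimate carries a factor $\sigma^{+1/q}$, i.e.\ the \emph{wrong} sign; if instead you use the unrefined Strichartz $\|Ef_{\tau,T}\|_{L^q(T)}\lessapprox \|f_{\tau,T}\|_2$ on the whole tube, $\sigma$ disappears entirely. Either way one cannot extract $\sigma^{-\gamma}$, and the residual $R$-power does not cancel. The incidence bound is monotone increasing in $\sigma$ (more cubes per slab means each tube could see more cubes), whereas the refined Strichartz asserts a \emph{gain} when $\sigma$ is large; that gain must come from elsewhere.

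In the paper's proof (which does follow Du--Guth--Li), each box $\Box$ is further partitioned into sub-boxes $S$ of dimensions $R^{1/2+\delta}\times\cdots\times R^{1/2+\delta}\times R^{1/4+\delta}\times R^{3/4+\delta}$, one pigeonholes a parameter $\sigma_\Box$ counting the $S$'s per $R^{3/4}$-slab inside $\Box$, and then Lorentz rescales each $\Box$ to a ball of radius $\sim R^{1/2}$ so that the refined Strichartz at scale $R^{1/2}$ (the inductive hypothesis) yields $\|Ef_\Box\|_{L^q(2Y_\Box)} \lessapprox \sigma_\Box^{-\gamma}\|f_\Box\|_2$. Decoupling and the constancy of $\|f_\Box\|_2$ give the additional factors $\mu^{\gamma}|\mathbb{B}|^{-\gamma}$ (where $\mu$ is the number of $Y_\Box$ meeting a given cube), and only then does the tube--slab geometry enter, as the incidence estimate $\mu \lessapprox \sigma_\Box|\mathbb{B}|/\sigma$, converting $\mu^\gamma |\mathbb{B}|^{-\gamma}\sigma_\Box^{-\gamma}$ into $\sigma^{-\gamma}$. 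Without the inductive factor $\sigma_\Box^{-\gamma}$ this cancellation cannot occur. Your proposal has the decoupling step and a version of the incidence step, but not the rescaling/inductive step in between; that is the missing idea.
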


\begin{remark} \label{scales} To use induction, it will be easier to prove a slightly more general statement, where each $Q$ in the theorem is replaced by $CQ$, for any constant $C \in [1,2]$, but the slabs are unchanged. The inductive assumption will be that this slightly more general statement of the theorem holds whenever $R$ is replaced everywhere in the theorem by $\widetilde{R}$, for any $\widetilde{R} \leq R^{3/4}$ ($3/4$ is not important, any exponent in $(1/2,1)$ would work). 

Given $\epsilon >0$, the inductive assumption is that 
\[ \lVert  Ef\rVert _{L^{q}(Y)} \leq C_{\epsilon} \widetilde{R}^{\epsilon}\sigma^{-\gamma} \lVert f\rVert _2, \]
 for any $f$ satisfying the (generalised) assumptions of the theorem, and any $\widetilde{R} \leq R^{3/4}$. Using this, it will be shown that for any $\delta>0$, the inequality 
\[ \lVert  Ef\rVert _{L^{q}(Y)} \leq C_{\epsilon} C_{\delta} R^{O(\delta)} R^{\epsilon/2} \sigma^{-\gamma}\lVert f\rVert _2, \]
holds for any $f$ satisfying the (generalised) assumptions of the theorem. By choosing $\delta>0$ small enough, depending only on $\epsilon$, and then taking $R$ large enough (depending only on $\epsilon$), this will give
\[ \lVert  Ef\rVert _{L^{q}(Y)} \leq C_{\epsilon} R^{\epsilon} \sigma^{-\gamma}\lVert f\rVert _2, \]
with the same constant $C_{\epsilon}$, which will close the induction and prove the theorem. To simplify notation the $R^{\epsilon}$ and $R^{\delta}$ factors will be absorbed into the symbols $\approx$ and $\lessapprox$, and therefore this argument will not be carried out explicitly. \end{remark}

\begin{proof}[Proof of Theorem~\ref{strichartzr}] After replacing $f(\xi)$ by $e^{-2\pi i(\langle x_0, \xi \rangle + t_0 \lvert \xi\rvert)} f(\xi)$ it may be assumed that the ball of radius $R$ containing $Y$ is centred at the origin. 

The function $f$ will first be broken up to make use of the inductive assumption that the theorems holds at smaller scales than $R$. Fix $\delta>0$, and use Proposition~\ref{wavepacket} to decompose 
\[ f = \sum_{\tau,\Box_{\tau}} f_{\Box}, \quad Ef = \sum_{\tau,\Box_{\tau}} Ef_{\Box}, \]
where the caps $\tau$ partitioning the truncated half cone $\Gamma_+$ are at scale $\sim R^{-1/4}$, each contained in a corresponding box with dimensions 
\[ \sim R^{-1/4} \times \dotsm \times R^{-1/4} \times 1 \times R^{-1/2}, \]
and each $Ef_{\Box}$ has (distributional) Fourier transform supported in the cap $\tau$ corresponding to $\Box$. Each $Ef_{\Box}$ satisfies $\left\lvert Ef_{\Box}(z,t) \right\rvert \lesssim_N R^{-N}$ for $(z,t) \in B(0,R)$ outside a set $\Box$ of dimensions 
\[ \sim R^{3/4+\delta} \times \dotsm \times R^{3/4+\delta} \times R^{1/2+\delta} \times R, \]
which has long axis normal to $\tau$ and short axis in the flat direction of $\tau$. Although each set $\Box$ depends on a cap $\tau$, this will be suppressed in the notation. Each $Ef_{\Box}$ can be partitioned further
\begin{equation} \label{tail} Ef_{\Box} = \sum_{2S \cap \Box \neq \emptyset} \eta_S Ef_{\Box} +\tail \quad \text{in} \quad B(0,R), \end{equation}
where $\tail(z,t) \lesssim_N R^{-N} \|f\|_2$ for $(z,t) \in B(0,R)$. For each $\tau$, the sets $S$ partition physical space and have the same axis orientations as $\Box$, with dimensions
\[ R^{1/2+\delta} \times \dotsm \times R^{1/2+\delta} \times R^{1/4+\delta} \times R^{3/4+\delta}. \]
These sets $S$ will become the cubes in the theorem statement after a Lorentz rescaling. The functions $\eta_S$ are smooth and sum to 1, with each $\eta_S \lesssim 1$ on $S$, non-negative, $\eta_S \lesssim_N R^{-N}$ outside $2S \cap \mathcal{N}_{R^{1/2+\delta}}(S)$, such that $\widehat{\eta_S}$ is supported in a box centred at the origin of dimensions
\begin{equation} \label{poisson} \sim R^{-1/2} \times \dotsm \times R^{-1/2} \times R^{-1/4} \times R^{-1/2}, \end{equation}
with long axis in the flat direction of $\tau$. Such a partition can be constructed by using the Poisson summation formula on the integer lattice to get a smooth non-negative function $\eta$ with 
\[ \sum_{k \in \mathbb{Z}^{d+1}} \eta(x-k) = 1 \quad \text{for all $x \in \mathbb{R}^{d+1}$,} \quad \text{and} \quad \supp \widehat{\eta} \subseteq B(0,c_d).\]
Rescaling by the dimensions in \eqref{poisson} and then grouping the functions together corresponding to scaled lattice points in $S$ gives the required functions $\eta_S$. The steep decay of $\eta_S$ outside the $\approx R^{1/2}$ neigbourhood of $S$ will be important when intersecting $S$ with the $R^{1/2}$-cubes in the theorem statement.  

Through pigeonholing, the sets $S$ will now be organised into collections that satisfy the theorem assumptions after a Lorentz rescaling. For each fixed $\Box$, sort the boxes $S$ from \eqref{tail} into sets $\mathbb{S}_{\lambda}$ according to the dyadic value $\lambda$ of $\left\lVert Ef_{\Box}\right\rVert _{L^q(2S)}$. By ignoring the very small values of $\lambda$ which do not contribute significantly, there are $\lesssim \log R$ relevant values of $\lambda$, which will be the only ones considered from now on. For each fixed $\lambda$ and dyadic number $\sigma_{\Box}$, define $\mathbb{S}_{\lambda,\sigma_{\Box}}$ by $S \in \mathbb{S}_{\lambda, \sigma_{\Box}}$ if and only if the number of boxes in $\mathbb{S}_{\lambda}$ inside the slab of width $R^{3/4+\delta}$ parallel to the tangent plane at $\tau$ lies in $\left[\sigma_{\Box}, 2\sigma_{\Box} \right)$. These slabs are different to those in the theorem statement, but will be rotated and scaled to use the inductive assumption. There are $\lesssim \log R$ dyadic values of $\sigma_{\Box}$, so there are $\lesssim \left(\log R\right)^2$ relevant pairs $\left(\lambda, \sigma_{\Box}\right)$. For any set $\Box$ let $Y_{\Box,\lambda,\sigma_{\Box}}$ be the union of boxes $S$ in $\mathbb{S}_{\lambda,\sigma_{\Box}}$. By \eqref{tail},
\[ Ef = \sum_{\lambda, \sigma_{\Box}} \sum_{\Box} \eta_{Y_{\Box,\lambda,\sigma_{\Box}}}Ef_{\Box} + \tail \quad \text{in $B(0,R)$,} \quad \eta_{Y_{\Box,\lambda,\sigma_{\Box}}} := \sum_{S \subseteq Y_{\Box,\lambda,\sigma_{\Box}}} \eta_S. \]
By the triangle inequality and the standard pigeonhole principle, there is a fixed pair $(\lambda, \sigma_{\Box})$ such that 
\begin{equation} \label{pigeon} \lVert Ef\rVert _{L^{q}(CQ)} \lessapprox \left\lVert   \sum_{\Box} \eta_{Y_{\Box,\lambda,\sigma_{\Box}}} Ef_{\Box} \right\rVert _{L^{q}(CQ)} + R^{-N} \lVert f\rVert _2,\end{equation}
for a fraction $\approx 1$ of the cubes $Q$. The quantity $R^{-N}\lVert f\rVert _2$ is a remainder term which absorbs the very small values of $\lambda$ and the tails from the wave packet decomposition, and it may essentially be ignored in the inequalities that follow. Henceforth $Y_{\Box,\lambda,\sigma_{\Box}}$ will be abbreviated to $Y_{\Box}$. Each $\Box$ can be sorted according to the dyadic value of $\lVert f_{\Box}\rVert _2$, and since there are only $\sim \log R$ relevant dyadic values, the triangle inequality and the standard pigeonhole principle applied again to the remaining cubes satisfying \eqref{pigeon} together yield a subset $\mathbb{B}$ of sets $\Box$ such that $\lVert f_{\Box}\rVert _2$ is constant over $\mathbb{B}$ up to a factor of 2, and 
\begin{equation} \label{pigeonhole} \lVert Ef\rVert _{L^{q}(CQ)} \lessapprox \left\lVert   \sum_{\Box \in \mathbb{B}} \eta_{Y_{\Box}}Ef_{\Box} \right\rVert _{L^{q}(CQ)} + R^{-N} \lVert f\rVert _2, \end{equation}
again for a fraction $\approx 1$ of the cubes $Q$. By further pigeonholing the remaining cubes satisfying \eqref{pigeonhole}, there is a dyadic number $\mu$ and a fraction $\approx 1$ of the cubes $CQ$, with union $Y'$, such that the bound in \eqref{pigeonhole} still holds and for each $CQ \subseteq Y'$ the cube $R^{2\delta}Q$ intersects $\sim \mu$ of the sets $Y_{\Box}$ with $\Box \in \mathbb{B}$. 

 In order to apply the inductive assumption to each $Ef_{\Box}$ individually, the decoupling theorem will be used to decouple the right hand side of \eqref{pigeonhole}. To set this up, let $\eta_Q$ be smooth non-negative functions such that $\eta_Q \sim 1$ on $2Q$, $\eta_Q$ is rapidly decaying outside $2Q$, with each $\widehat{\eta_Q}$ compactly supported in a ball of radius $\sim R^{-1/2}$, and such that $\sum_Q \eta_Q \lesssim 1$ (this can be done, for example, by using Poisson summation again). To apply decoupling, each function $\eta_{Y_{\Box}} \eta_{Q}Ef_{\Box}$ has Fourier transform supported in an $\sim R^{-1/2}$ neighbourhood of $\tau$ since the short direction of each $S\subseteq Y_{\Box}$ is in the flat direction of $\tau$. Moreover, for each fixed $\tau$, the sets $\Box$ corresponding to $\tau$ form a finitely overlapping cover of physical space and have side lengths at least as large as those of $Q$, which means that each cube $R^{2\delta}Q$ intersects $\lessapprox 1$ set $\Box$ associated to $\tau$, for each $\tau$. Therefore the disjointness assumption in the decoupling theorem is satisfied, and so by applying Theorem~\ref{decoupling} to each $CQ \subseteq Y'$,
\begin{align*} \lVert Ef\rVert _{L^{q}(CQ)} &\lessapprox \left\lVert   \sum_{\Box \in \mathbb{B}} \eta_{Y_{\Box}}Ef_{\Box}\right\rVert _{L^{q}(CQ)} + R^{-N} \lVert f\rVert _2  && \text{(by \eqref{pigeonhole})} \\
&\lessapprox \Bigg\lVert   \sum_{\substack{\Box \in \mathbb{B}  \\
Y_{\Box} \cap R^{2\delta}Q \neq \emptyset }}\eta_{Y_{\Box}} \eta_{Q} Ef_{\Box} \Bigg\rVert _{q} + R^{-N} \lVert f\rVert _2  \\
&\lessapprox \Bigg( \sum_{\substack{\Box \in \mathbb{B} \\
Y_{\Box} \cap R^{2\delta}Q \neq \emptyset }}\left\lVert    \eta_{Y_{\Box}} \eta_{Q}Ef_{\Box} \right\rVert _{q}^2 \Bigg)^{1/2}  + R^{-N} \lVert f\rVert _2 && \text{(by Theorem~\ref{decoupling})}  \\
&\lesssim \mu^{ \frac{1}{2} - \frac{1}{q}}\left( \sum_{\Box \in \mathbb{B} }\left\lVert  \eta_{Y_{\Box}}\eta_{Q} Ef_{\Box} \right\rVert _{q}^{q} \right)^{1/q} + R^{-N} \lVert f\rVert _2 .  \end{align*}
Raising both sides to the power $q$ and summing over $CQ \subseteq Y'$ gives (since $\sum_{Q} \eta_{Q}  \lesssim 1$ and $\eta_{Y_{\Box}}$ is rapidly decaying outside $2Y_{\Box}:= \bigcup_{S \subseteq Y_{\Box}} 2S$)
\begin{align*} \lVert Ef\rVert _{L^{q}(Y')}^{q}  &\lessapprox \mu^{ q \left( \frac{1}{2}- \frac{1}{q}\right) } \sum_{\Box \in \mathbb{B}} \left\lVert  Ef_{\Box}\right\rVert ^{q}_{L^q(2Y_{\Box})} + R^{-Nq} \lVert f\rVert _2^q . \end{align*}
 Since $Y'$ contains a fraction $\approx 1$ of the cubes in $Y$, and since the cubes contribute equally, this gives
\begin{align} \notag \lVert Ef\rVert _{L^{q}(Y)}^{q} &\lessapprox \mu^{ q \left( \frac{1}{2}- \frac{1}{q}\right) } \sum_{\Box \in \mathbb{B}} \left\lVert  Ef_{\Box}\right\rVert ^{q}_{L^q(2Y_{\Box})} + R^{-Nq} \lVert f\rVert _2^q  \\
\label{rescaling} &\lessapprox \mu^{ q \left( \frac{1}{2}- \frac{1}{q}\right) } \sum_{\Box \in \mathbb{B}} \left(  \sigma_{\Box}^{-\gamma}  \lVert f_{\Box}\rVert _2 \right)^q + R^{-Nq} \lVert f\rVert _2^q \quad \text{(see below)}\\
\label{qthroot} &\lesssim \mu^{ q \left( \frac{1}{2}- \frac{1}{q}\right) }\sigma_{\Box}^{-\gamma q}  \lvert \mathbb{B}\rvert^{  1- \frac{q}{2}} \lVert f\rVert _2^{q}, \end{align}
since $\lVert f_{\Box}\rVert _2$ and $\sigma_{\Box}$ are constant in $\Box$ up to a factor of 2, and $\sum_{\Box \in \mathbb{B}} \left\lVert f_{\Box} \right\rVert _2^2 \lesssim \lVert f\rVert _2^2$. To justify the Lorentz rescaling step in \eqref{rescaling}, after a rotation it may be assumed that the flat direction of the cap $\tau$ corresponding to $\Box$ is $\frac{ e_{d+1}+e_d}{\sqrt{2}}$. Define $B: \mathbb{R}^{d+1} \to \mathbb{R}^{d+1}$ by
\[ Bx = \left( R^{1/4}x_1,\dotsc, R^{1/4}x_{d-1}, x_d, R^{1/2}x_{d+1}\right), \] 
and define $\eta$ and $\left(\widetilde{x}, \widetilde{t}\right)$ by 
\[ \left(\eta, \left\lvert \eta\right\rvert\right) = A^*BA (\xi,\lvert \xi\rvert) \quad \text{for $\xi \in \tau$,} \quad \left(\widetilde{x}, \widetilde{t}\, \right) = A^* B^{-1} A(x,t). \]
Under this change of variables, the boxes $2S$ are sent to cubes $2\widetilde{Q}$ of side length $2R^{1/4+\delta}$, whose union is denoted by $\widetilde{Y}$. Moreover,
\[ \lVert Ef_{\Box}\rVert _{L^{q}\left(2Y_{\Box}\right)} = \lVert Eg\rVert _{L^{q}\left(\widetilde{Y}\right)} \quad \text{and} \quad \lVert f_{\Box}\rVert _2 \sim \lVert g\rVert _2, \]
where
\[ g( \eta) := R^{ \frac{d-1}{8}}   \left\lvert \frac{d \xi}{d\eta}\right\rvert f_{\Box} (\xi), \]
which has support contained in 
\begin{equation} \label{stars} \pi\left( \left\{ (\xi,\lvert \xi\rvert): \xi_d \geq 0\right\} \cap B\left(0,3 \sqrt{2}\right) \setminus B(0,1)\right), \end{equation}
provided $\tau$ is at scale $cR^{-1/4}$ for some small constant $c$ depending only on $d$ (which may be assumed). The values $\lVert Ef_{\Box}\rVert _{L^q(2S)} = \lVert Eg\rVert _{L^{q}(2\widetilde{Q})}$ are dyadically constant over $S,\widetilde{Q}$ by definition of the dyadic values $\lambda$. The slabs are of the form $A^* \left( \mathbb{R}^d \times [t_0, t_0 + R^{1/4+\delta}]\right)$ and each contains $\sim \sigma_{\Box}$ cubes, all inside a ball of radius $R^{1/2+2\delta}$. Even though the set in \eqref{stars} may be outside $B(0,2) \setminus B(0,1)$, by a constant linear rescaling this only affects the inequality by a constant factor. Therefore the generalised statement of the theorem at scale $R^{1/2+2\delta}$ can be applied to give
\[ \lVert Ef_{\Box}\rVert _{L^{q}(2Y_{\Box})} \lessapprox \sigma_{\Box}^{-\gamma}\lVert f_{\Box}\rVert _2, \]
which is \eqref{rescaling}. Taking the $q$-th root of \eqref{qthroot} gives
\begin{equation} \label{bound} \lVert Ef\rVert _{L^{q}(Y)} \lessapprox \mu^{ \gamma } \lvert \mathbb{B}\rvert^{ - \gamma} \sigma_{\Box}^{-\gamma}   \lVert f\rVert _2, \quad  \gamma = \frac{1}{2}-\frac{1}{q}. \end{equation}

To finish the proof, it suffices to show that $\mu \lessapprox \frac{ \sigma_{\Box} \lvert \mathbb{B}\rvert}{\sigma}$. The multiplicity $\mu$ satisfies
\begin{align*} \mu m(Y) &\lessapprox \mu m(Y') \\
&\sim  \mu \sum_{CQ \subseteq Y'}  m(Q) \\
&\sim \sum_{CQ \subseteq Y'} \sum_{\substack{\Box \in \mathbb{B} \\
Y_{\Box} \cap R^{2\delta} Q \neq \emptyset }} m( Q) && \text{(by definition of $\mu$)} \\
&\lessapprox \sum_{CQ \subseteq Y'} \sum_{\substack{\Box \in \mathbb{B} \\
Y_{\Box} \cap R^{2\delta} Q \neq \emptyset }} R^{1/4}m\left(Y_{\Box} \cap R^{3\delta} CQ\right) \\
&\lessapprox  \sum_{\Box \in \mathbb{B}} R^{1/4}m\left(Y_{\Box} \cap R^{3\delta}Y'\right) \\
 &\leq \sum_{\Box \in \mathbb{B}} R^{1/4}m\left(Y_{\Box} \cap R^{3\delta} Y\right). \end{align*}
Therefore it suffices to show that for each $\Box \in \mathbb{B}$, 
\[m\left(Y_{\Box} \cap R^{3\delta} Y\right) \lessapprox \frac{ \sigma_{\Box} m(Y) R^{-1/4} }{\sigma}. \]
By breaking both sides of this inequality into slabs $A^*\left(\mathbb{R}^d \times \left[t, t+CR^{1/2 + 3\delta} \right] \right)$ covering $R^{3\delta}Y$ and containing $\approx \sigma$ cubes $Q$ in each slab, it suffices to show that 
\begin{equation} \label{intermediate} m\left(Y_{\Box} \cap A^*\left(\mathbb{R}^d \times \left[t, t+CR^{1/2 + 3\delta} \right] \right)\right) \lessapprox \sigma_{\Box} m(Q) R^{-1/4}, \end{equation}
where $m(Q)= R^{\frac{d+1}{2}}$ depends only on $R$. The long axis of $\Box$ makes an acute angle $\gtrsim 1$ with the $CR^{1/2+3\delta}$ slab, and therefore intersects it in a set of diameter $\lessapprox R^{3/4}$. Hence there are $\lessapprox \sigma_{\Box}$ sets $S \subseteq Y_{\Box}$ in the intersection on the left hand side. Since each such set $S$ has long axis in the same direction as $\Box$, the intersection of $S$ with the $CR^{1/2+3\delta}$ slab is contains in a box of dimensions 
\[  \sim R^{1/2+\delta}  \times \dotsm \times R^{1/2+\delta} \times R^{1/4+\delta} \times R^{1/2+3\delta}, \]
and therefore the intersection has measure $\lessapprox m(Q) R^{-1/4}$. Adding up the contributions of each $S$ gives \eqref{intermediate}, and this yields
\[ \mu \lessapprox \frac{ \sigma_{\Box} }{\sigma} \lvert \mathbb{B}\rvert. \]
Combining with \eqref{bound} results in
\begin{align*} \lVert Ef\rVert _{L^{q}(Y)} &\lessapprox \sigma^{-\gamma}   \lVert f\rVert _2. \end{align*}
By the induction on scales argument explained in Remark \ref{scales}, this proves the theorem. \end{proof}

\begin{remark} A similar example to the Schrödinger case \cite{du2} shows that the inequality is sharp; $Y$ is a collection of $\sigma$ disjoint parallel tubes of radius $R^{1/2}$ and length $R$, $f$ is an essentially orthogonal sum $f= \sum_{\nu} f_{\nu}$ over the different tubes with $\left\lVert f_{\nu}\right\rVert _2$ constant in $\nu$, and each $Ef_{\nu}$ essentially constant and supported on a thinner tube inside the larger one. 

To be more precise, let $\phi$ be a Schwartz function on $\mathbb{R}^d$ satisfying $0 \leq \widehat{\phi} \leq 1$, with $\widehat{\phi}$ equal to 1 on $B((3/2) e_d, \epsilon)$ and equal to zero outside $B((3/2) e_d, 2\epsilon)$, for a fixed small $\epsilon >0$. Then $\left\lvert E\widehat{\phi} \right\rvert \sim 1$ on a ball $B(0,C_{\epsilon})$ for some small $C_{\epsilon}>0$. Let $\tau$ be a cap in the cone at scale $R^{-1/2}$ with centre line in the direction $\frac{e_{d+1}+e_d}{\sqrt{2}}$. Let $\eta = \pi (A^*BA(\xi,\lvert \xi\rvert) ) \in \mathbb{R}^d$ where $B(\omega_1, \ldots, \omega_d, \omega_{d+1}) = (R^{1/2} \omega_1, \dotsc, R^{1/2} \omega_{d-1}, \omega_d, R\omega_{d+1})$, and define $\phi_{\tau}$ on the Fourier side by $\widehat{\phi_{\tau}}(\xi) = \left\lvert \frac{d\eta}{d\xi}\right\rvert \widehat{\phi}(\eta)$. Then $\left\lvert E\widehat{\phi_{\tau}} \right\rvert \sim 1$ on $(A^*BA)B(0,C_{\epsilon})$ by a change of variables, and the restriction of $E\widehat{\phi_{\tau}}$ to $B(0,R)$ is essentially supported on a larger box of similar dimensions $R^{1/2+\delta} \times \dotsm \times R^{1/2+\delta} \times R^{\delta} \times 2R$, by an integration by parts as in the wave packet decomposition. Let $\{T_{\nu}\}$ be a finitely overlapping cover of $B(0,R)$ with translates of this larger box by points in $A^*(\mathbb{Z}^d \times \{0\})$, each containing a corresponding translate $S_{\nu}$ of the smaller box. Let $c(\nu)$ be the centre of $S_{\nu}$. Define $\phi_{\tau,\nu}$ on the Fourier side by $\widehat{\phi_{\tau,\nu}}(\xi) = e^{-2 \pi i \left\langle c(\nu), (\xi,\lvert \xi\rvert) \right\rangle}  \widehat{\phi_{\tau}}(\xi)$, and let $f_{\nu} = \widehat{\phi_{\tau,\nu}}$. Let $\mathbb{T}$ be a subcollection of boxes $T_{\nu}$ which intersect $B(0,R/2)$ and are $R^{1/2+2\delta}$-separated from each other, such that $\lvert \mathbb{T}\rvert = \sigma$. Let $f = \sum_{T_\nu \in \mathbb{T}} f_\nu$. Cover each $S_{\nu}$ with $\sim R^{1/2-2\delta}$ disjoint $R^{1/2+2\delta}A^*\mathbb{Z}^{d+1}$-lattice cubes $Q$, and let $Y$ be the union of all such $Q$, over all $T_{\nu} \in \mathbb{T}$. By a change of variables, for any $Q$ intersecting $S_{\nu}$, 
\[ \lVert Ef\rVert _{L^q(Q)}^q \approx \lVert Ef_{\nu} \rVert _{L^q(Q \cap 2S_{\nu})}^q \approx R^{-1/2} \lVert Ef_{\nu} \rVert _{L^q(S_{\nu})}^q \approx  R^{d/2}. \]
Hence $\lVert Ef\rVert _{L^q(Y)}^q \approx \sigma R^{\frac{d+1}{2}}$. A rotation and integration by parts shows that the $f_{\nu}$ are essentially orthogonal, and so $\lVert f\rVert _2^2 \approx \sigma R^{\frac{d-1}{2}}$, which gives $\lVert Ef\rVert _{L^q(Y)} \gtrapprox \sigma^{-\gamma} \lVert f\rVert _2$. This verifies that the inequality is sharp. \end{remark}

\section{Multilinear refined Strichartz inequality} \label{multilinearstrichartz}
As in the Schrödinger case \cite{du2}, the linear refined Strichartz inequality for the wave equation implies a multilinear version. The proof is similar to the one in \cite{du2}; it utilises the $k$-linear Kakeya inequality in $\mathbb{R}^n$ from \cite{bennett,guth3}. To state this, given $\nu >0$, a collection $\mathbb{T}_1, \dotsc, \mathbb{T}_k$ of $k$ sets consisting of tubes $T_i \in \mathbb{T}_i$, of infinite length and equal radius, is called $\nu$-transverse if
\[ \left\lvert  v_1 \wedge \cdots \wedge v_k \right\rvert \geq \nu \quad \text{for all $T_i \in \mathbb{T}_i$, with $1 \leq i \leq k$,} \]
where $v_i$ is the infinite direction in $T_i$. 
\begin{theorem}[{\cite[Theorem~5.1]{bennett}}] Suppose that $2 \leq k \leq n$ and that $\mathbb{T}_1, \dotsc, \mathbb{T}_k$ are $\nu$-transverse families of tubes in $\mathbb{R}^n$ of radius $\delta$. If $\frac{k}{k-1} < q \leq \infty$, then 
\begin{equation} \label{kakeya} \left\lVert  \prod_{i=1}^k  \left( \sum_{T_i \in \mathbb{T}_i} \chi_{T_i} \right) \right\rVert _{L^{q/k}\left(\mathbb{R}^n\right)} \leq C(\nu,q,n) \prod_{i=1}^k \left( \delta^{n/q} \left\lvert  \mathbb{T}_i \right\rvert \right). \end{equation}
\end{theorem}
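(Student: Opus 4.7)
My plan is to combine a scaling reduction with an induction on scales based on the Loomis--Whitney inequality, following the argument of Bennett--Carbery--Tao.

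First, set $\widetilde{T}_i := \delta^{-1} T_i$ so that the rescaled tubes have radius $1$, and put $\widetilde{\mathbb{T}}_i := \{\widetilde{T}_i : T_i \in \mathbb{T}_i\}$. The change of variables $x = \delta y$ gives
\[
\left\|\prod_{i=1}^k \sum_{T_i \in \mathbb{T}_i}\chi_{T_i}\right\|_{L^{q/k}(\mathbb{R}^n)} = \delta^{nk/q} \left\|\prod_{i=1}^k \sum_{\widetilde{T}_i \in \widetilde{\mathbb{T}}_i}\chi_{\widetilde{T}_i}\right\|_{L^{q/k}(\mathbb{R}^n)},
\]
so the factor $\delta^{nk/q} = \prod_i \delta^{n/q}$ cancels and it suffices to prove the inequality at $\delta = 1$ with right-hand side $C(\nu,q,n) \prod_i |\mathbb{T}_i|$. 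The case $q = \infty$ is then immediate from $\chi_{T_i} \leq 1$ and the triangle inequality, giving the bound with constant $1$.

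The base case of the induction is the classical Loomis--Whitney inequality
\[
\left\|\prod_{i=1}^k g_i \circ \pi_i\right\|_{L^{1/(k-1)}(\mathbb{R}^k)} \leq \prod_{i=1}^k \|g_i\|_{L^1(\mathbb{R}^{k-1})},
\]
where $\pi_i$ is the projection omitting the $i$-th coordinate. When all tubes in $\mathbb{T}_i$ are parallel to $e_i$ in $\mathbb{R}^k$, each $\chi_{T_i}$ factors through $\pi_i$ and Loomis--Whitney directly yields the $k$-linear Kakeya bound at the endpoint $q = k/(k-1)$ with a constant depending only on $\nu$; the extension to $n > k$ is handled by Fubini in the $n - k$ additional directions orthogonal to $\mathrm{span}(e_1,\ldots,e_k)$. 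The general case, in which the direction vectors $v_i$ of tubes within a fixed family $\mathbb{T}_i$ vary, is reduced to the coordinate case by an approximate diagonalisation on each dyadic scale: partition the ambient ball of radius $R$ into sub-balls of radius $R/2$, approximate the directions of tubes meeting a given sub-ball by a single $k$-frame (this is permissible thanks to the $\nu$-transversality, losing only an absolute constant), apply Loomis--Whitney locally, and invoke the inductive hypothesis at scale $R/2$. The strict inequality $q > k/(k-1)$ provides the exponent slack required to close the induction with a constant uniform in $R$; at the endpoint itself this slack vanishes, and one must instead invoke the heat-flow monotonicity argument of Bennett--Carbery--Tao or the polynomial method of Guth.

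The main obstacle is the induction step: the varying tube directions within each family obstruct a direct Loomis--Whitney argument, so one has to simultaneously diagonalise all $k$ families at each scale and control the error accumulated in the diagonalisation against the inductive constant. This coupling between geometric transversality and the exponent gap $q - k/(k-1) > 0$ is the technical heart of the Bennett--Carbery--Tao argument, and it is precisely what allows the non-endpoint version stated here to be proved without recourse to the deeper endpoint techniques.
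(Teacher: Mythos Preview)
The paper does not give its own proof of this statement: the $k$-linear Kakeya inequality is quoted as a black box from \cite{bennett,guth3} and then applied in the proof of Theorem~\ref{mstrichartzr}. So there is no argument in the paper to compare your attempt against.

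As for your sketch itself, the scaling reduction and the $q=\infty$ case are fine, and identifying Loomis--Whitney as the underlying mechanism is correct in spirit. However, the induction step you describe does not work as written. Passing from a ball of radius $R$ to sub-balls of radius $R/2$ does nothing to concentrate the directions of the tubes within a single family $\mathbb{T}_i$: those directions can vary over a region of the sphere of diameter $\sim 1$ regardless of the spatial scale, so there is no sense in which one can ``approximate the directions of tubes meeting a given sub-ball by a single $k$-frame'' and thereby reduce to the axis-parallel Loomis--Whitney case. The $\nu$-transversality hypothesis only constrains directions \emph{between} different families, not within a single family.

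The actual inductions in \cite{bennett} and \cite{guth3} proceed differently. One does not collapse directions; instead one changes the \emph{width} of the tubes relative to the scale. In Guth's version, for instance, one covers the ambient ball by cubes $Q$ of an intermediate side length, observes that each thin tube meeting $Q$ sits inside a fatter tube of width comparable to that side length, controls how many thin tubes lie in each fat tube, and applies the inductive hypothesis to the fat tubes (which now form a smaller configuration). The exponent gap $q>\frac{k}{k-1}$ enters exactly to absorb the loss incurred in this fattening step. Your write-up would need to be reworked along these lines to become a valid argument.
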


In what follows, the preceding theorem will be applied with $n=d+1$. The statement that $f_1, \dotsc, f_k \in L^2(\mathbb{R}^d)$ are transversally supported in $B(0,2) \setminus B(0,1)$ will mean that each $\supp f_i \subseteq B(0,2) \setminus B(0,1)$, and any $k$-tuple $(\xi_1, \dotsc, \xi_k)$ with $\xi_i \in \supp f_i$ satisfies
\[ \left\lvert  G(\xi_1) \wedge \cdots \wedge G(\xi_k) \right\rvert \gtrsim 1, \]
where $G(\xi)$ is the unit normal to the cone at $(\xi,\lvert \xi\rvert)$. 

\begin{theorem} Fix $d \geq 2$. Suppose that $2 \leq k \leq d+1$ and that $f_1, \dotsc, f_k \in L^2(\mathbb{R}^d)$ are transversally supported in $\{\xi_d \geq 0\} \cap B(0,2) \setminus B(0,1)$, and let $Y= \bigcup_{j=1}^N Q_j$ be a collection of $R^{1/2} A^*\mathbb{Z}^{d+1}$-lattice cubes in $B(0,R)$. If for each $i$, $\left\lVert Ef_i\right\rVert _{L^q(Q_j)}$ is constant in $j$ up to a factor of 2, then for any $\epsilon>0$,
\[ \left\lVert  \prod_{i=1}^k \left\lvert Ef_i \right\rvert^{1/k} \right\rVert _{L^{q}(Y)} \leq C_{\epsilon}R^{\epsilon}N^{-\frac{(k-1)\gamma}{k}} \prod_{i=1}^{k} \lVert f_i\rVert _2^{1/k}. \]
where $\gamma = \frac{1}{2}-\frac{1}{q}$ and $q  = \frac{2(d+1)}{d-1}$.
\end{theorem}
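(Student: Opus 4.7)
The plan is to mirror the proof of the analogous multilinear refined Strichartz inequality for the Schrödinger equation in \cite{du2}, combining the linear bound from Theorem \ref{strichartzr} with the $k$-linear Kakeya inequality \eqref{kakeya}. First I would apply the wave packet decomposition of Proposition \ref{wavepacket} to each $f_i$, writing $Ef_i \approx \sum_{T_i \in \mathbb{T}_i} Ef_{i,T_i}$, where each $T_i$ is a tube whose long axis points in the direction normal to the cone at the corresponding cap of $\supp f_i$. The transversality hypothesis on $\supp f_1,\dotsc,\supp f_k$ implies that normals to the cone at points in distinct $\supp f_i$ have exterior product bounded below by some $\nu \gtrsim 1$, so the $k$ families of wave packet tubes (extended to infinite cylinders of radius $R^{1/2+\delta}$) are $\nu$-transverse and form valid input for \eqref{kakeya}.

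Next I would carry out the standard dyadic pigeonholing (as used in the proof of Theorem \ref{strichartzr}), losing only $R^{O(\epsilon)}$ factors, to reduce to a subfamily of wave packets in which $\|f_{i,T_i}\|_2$ is essentially constant in $T_i$ for each fixed $i$, and a subfamily of cubes $Y'\subseteq Y$ in which the number $\mu_i$ of relevant tubes $T_i\in \mathbb{T}_i$ incident to any surviving cube $Q_j$ is essentially constant in $j$ (independent of $Q_j$). On each such $Q_j$, Cauchy-Schwarz combined with the essential orthogonality of $L^2$-wave packets gives pointwise control $|Ef_i(x)| \lessapprox \mu_i^{1/2}\bigl(\sum_{T_i \cap Q_j \neq \emptyset} |Ef_{i,T_i}(x)|^2\bigr)^{1/2}$, which converts the tube-count information into $L^q(Q_j)$ bounds, and then into $L^q(Y')$ bounds by summing over the surviving cubes. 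The hypothesis that $\|Ef_i\|_{L^q(Q_j)}$ is dyadically constant in $j$ then pins down the wave packet amplitude $\|Ef_{i,T_i}\|_{L^q(T_i)}$ in terms of $\mu_i$, the cube count, and the known $\|f_i\|_2$.

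The crucial input is the $k$-linear Kakeya inequality \eqref{kakeya} applied to the $\nu$-transverse families $\mathbb{T}_i$ with exponent $q/k$ slightly above $1/(k-1)$: this yields an estimate of the form $N \prod_{i=1}^k \mu_i^{1/(k-1)} \lessapprox \prod_{i=1}^k |\mathbb{T}_i|^{1/(k-1)}$, controlling the cube count $N$ in terms of the per-cube multiplicities and the total tube counts, which are themselves bounded by $|\mathbb{T}_i| \lessapprox \|f_i\|_2^2 / \|f_{i,T_i}\|_2^2$ via \eqref{essorthogonality2}. Substituting these relations back into $\|\prod_i |Ef_i|^{1/k}\|_{L^q(Y')}^q \sim N \prod_i \|Ef_i\|_{L^q(Q_j)}^{q/k}$ and balancing the powers of $N$, $\mu_i$, and $|\mathbb{T}_i|$ should produce exactly the claimed exponent $N^{-(k-1)\gamma/k}$ in front of $\prod_i \|f_i\|_2^{1/k}$. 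The main obstacle is the delicate exponent arithmetic: verifying that after applying Kakeya with the correct exponent and combining with the linear refined Strichartz bound $\|Ef_i\|_{L^q(Y')} \lessapprox \sigma^{-\gamma}\|f_i\|_2$, the powers of the various counting parameters cancel to give precisely $(k-1)\gamma/k$ rather than a nearby but incorrect value, while all losses remain at the acceptable $R^{\epsilon}$ level.
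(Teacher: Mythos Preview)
Your outline mirrors the Schr\"odinger argument in \cite{du2}, but it misses the structural difference that makes the cone case harder: the wave packets of Proposition~\ref{wavepacket} are not round tubes of radius $R^{1/2}$. They are anisotropic plates of dimensions $\sim R^{3/4+\delta}\times\cdots\times R^{3/4+\delta}\times R^{1/2+\delta}\times R$, so your step of ``extending to infinite cylinders of radius $R^{1/2+\delta}$'' fails --- such cylinders do not contain the plates, and if you fatten to radius $R^{3/4+\delta}$ the Kakeya count is at the wrong scale relative to the $R^{1/2}$-cubes $Q$. A single application of \eqref{kakeya} to one family of cylinders cannot yield $N\prod_i\mu_i^{1/(k-1)}\lessapprox\prod_i|\mathbb{T}_i|^{1/(k-1)}$ here.

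The paper resolves this by importing the two-scale structure from the proof of Theorem~\ref{strichartzr}: each $f_i$ is decomposed into pieces $f_{\Box,i}$ indexed by intermediate-scale boxes $\Box$, and for each $\Box$ there is a further partition into boxes $S$ and a parameter $\sigma_{\Box,i}$ counting $S$-boxes per slab. Rather than applying Theorem~\ref{strichartzr} as a black box (which you cannot do, since the multilinear theorem has no slab-uniformity hypothesis on $Y$), the paper reruns that proof up to the intermediate bound \eqref{bound}, obtaining
\[
\Bigl\|\prod_i|Ef_i|^{1/k}\Bigr\|_{L^q(Y)}\lessapprox\Bigl(\prod_i\frac{\mu_i}{|\mathbb{B}_i|\,\sigma_{\Box,i}}\Bigr)^{\gamma/k}\prod_i\|f_i\|_2^{1/k}.
\]
The required counting estimate $N\prod_i\mu_i^{1/(k-1)}\lessapprox\prod_i(|\mathbb{B}_i|\,\sigma_{\Box,i})^{1/(k-1)}$ is then proved by applying $k$-linear Kakeya \emph{twice}: once inside each $R^{3/4}$-ball to tubes of radius $\sim R^{1/2}$ through the $S$-boxes (producing the $\sigma_{\Box,i}$ factor), and once at the coarser scale to tubes of radius $\sim R^{3/4}$ through the boxes $\Box$ themselves (producing the $|\mathbb{B}_i|$ factor). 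Your proposal would need this two-scale Kakeya step and the intermediate parameters $\sigma_{\Box,i}$, $|\mathbb{B}_i|$ to go through.
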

\begin{proof} Using Proposition~\ref{wavepacket}, decompose
\[ f_i = \sum f_{\Box,i}, \]
for each $i$, as in the proof of Theorem~\ref{strichartzr}. By Hölder's inequality,
\[ \left\lVert  \prod_{i=1}^k \left\lvert Ef_i \right\rvert^{1/k} \right\rVert _{L^{q}(Y)} \leq \prod_{i=1}^k \left\lVert  Ef_i \right\rVert _{L^{q}(Y)}^{1/k}. \]
To each $f_i$, apply the same steps as in the proof of Theorem~\ref{strichartzr} up to \eqref{bound}, except without replacing the cubes $Q$ by $CQ$, and instead choose the set $Y'$ of cubes in $Y$ to be the same for each $i$ (by doing this first for $f_1$, and then $f_2$ on the good cubes from $f_1$, and so on). The sets $\Box_i$, $Y_{\Box_i}, \mathbb{B}_i$ and parameters $\mu_i$, $\sigma_{\Box,i}$ are all defined as before. Using \eqref{bound} for each $f_i$ (and the generalised version of Theorem~\ref{strichartzr} at \eqref{rescaling} (instead of the inductive assumption)) yields
\begin{equation} \label{aim} \left\lVert  \prod_{i=1}^k \left\lvert Ef_i \right\rvert^{1/k} \right\rVert _{L^{q}(Y)} \lessapprox \left(\prod_{i=1}^k  \frac{\mu_i}{\left\lvert \mathbb{B}_i\right\rvert \sigma_{\Box,i} } \right)^{\gamma/k}  \prod_{i=1}^{k} \lVert f_i\rVert _2^{1/k}. \end{equation}
To finish the proof it therefore suffices to show that
\begin{equation} \label{sufficient} N \prod_{i=1}^k  \mu_i^{\frac{1}{k-1}} \lessapprox \prod_{i=1}^k \left(\left\lvert \mathbb{B}_i\right\rvert \sigma_{\Box,i}\right)^{\frac{1}{k-1}}. \end{equation}
To this end, for each $Y_{\Box_i}$ let $Z_{\Box_i} = \bigcup_{S \subseteq Y_{\Box_i}} F(S)$, where for each box $S \subseteq Y_{\Box_i}$ of dimensions 
\[ R^{1/2+ \delta} \times \dotsm \times R^{1/2+\delta} \times R^{1/4+\delta} \times R^{3/4+\delta}, \]
the set $F(S)$ is defined to be the box with dimensions
\[ R^{1/2+ 4\delta} \times \dotsm \times R^{1/2+4\delta} \times R^{1/2+4\delta} \times R^{3/4+4\delta}, \]
with the same centre and orientations as $S$, obtained by expanding $S$ by $\approx R^{1/4}$ in the second last direction and $\approx 1$ in all the other directions. This adjustment ensures that if the expanded cube $R^{2\delta}Q$ of side length $R^{1/2 + 2\delta}$ intersects $Y_{\Box_i}$, then $Q$ is completely contained in $Z_{\Box_i}$. Recall that each $\mu_i$ was chosen so that for each $Q \subseteq Y'$, there are $\sim \mu_i$ sets $\Box_i \in \mathbb{B}_i$ such that $Y_{\Box_i}$ intersects $R^{2\delta}Q$. 

Let $\{B\}_B$ be a finitely overlapping cover of $B(0,R)$ by balls $B$ of radius $R^{3/4}$ such that each cube $Q \subseteq Y$ is completely contained in some $B$. Define
\[ \mathbb{B}_{i,B} = \left\{ \Box_i \in \mathbb{B}_i :  \Box_i \cap R^{2\delta} B \neq \emptyset \right\}. \] Invoking the definition of each $\mu_i$ in the left hand side of \eqref{sufficient} yields
\begin{align*}  N \prod_{i=1}^k  \mu_i^{\frac{1}{k-1}} &\lessapprox \frac{1}{R^{\frac{d+1}{2}}} \sum_{Q \subseteq Y'}  \int_Q \prod_{i=1}^k \mu_i^{\frac{1}{k-1}} \, dx\\
&\lesssim  \frac{1}{R^{\frac{d+1}{2}}} \sum_B \sum_{Q \subseteq B \cap Y'} \int_Q  \prod_{i=1}^k\left( \sum_{\Box_i \in \mathbb{B}_{i,B}} \chi_{Z_{\Box_i}} \right)^{\frac{1}{k-1}} \, dx \\
&\leq  \frac{1}{R^{\frac{d+1}{2}}}\sum_B \int_B  \prod_{i=1}^k\left( \sum_{\Box_i \in \mathbb{B}_{i,B}} \chi_{Z_{\Box_i}} \right)^{\frac{1}{k-1}} \, dx.\end{align*}

 Each $\chi_{Z_{\Box_i}}$ in the product can be bounded by $\sum_{T \in \mathbb{T}_i} \chi_T$ where each $T$ is a tube of radius $R^{1/2+5\delta}$ and infinite length in the long direction of $\Box_i$ through the centre line of some $S \subseteq Y_{\Box_i}$ which intersects $R^{3\delta}B$. For each $B$, all such sets $S$ come from $\lessapprox 1$ slab of width $\approx R^{3/4}$ in $\Box_i$, so there are $\lessapprox \sigma_{\Box,i}$ such tubes $T \in \mathbb{T}_i$, for each $i$. Therefore, the $k$-linear Kakeya inequality \eqref{kakeya} with $n=d+1$ and exponent close to $\frac{k}{k-1}$ implies
\begin{align} \label{halfway}  N \prod_{i=1}^k  \mu_i^{\frac{1}{k-1}} &\lessapprox \left(\prod_{i=1}^k \sigma_{\Box,i}^{\frac{1}{k-1}} \right)  \cdot \sum_B  \left(\prod_{i=1}^k \left\lvert  \mathbb{B}_{i,B} \right\rvert^{\frac{1}{k-1}}\right). \end{align}

For each $\Box_i \in \mathbb{B}_{i}$, let $T_{\Box_i}$ be a tube of infinite length and radius $R^{3/4+5\delta}$ in the long direction of $\Box_i$, with the same centre line as $\Box_i$. The sum on the right hand side of \eqref{halfway} satisfies
\begin{align*} \sum_B \left(\prod_{i=1}^k\left\lvert  \mathbb{B}_{i,B} \right\rvert^{\frac{1}{k-1}}\right) &\leq \frac{1}{R^{\frac{3(d+1)}{4}}}  \sum_B \int_B \prod_{i=1}^k \left( \sum_{\Box_i \in \mathbb{B}_{i,B}} \chi_{T_{\Box_i}} \right)^{\frac{1}{k-1}} \, dx \\
&\lesssim \frac{1}{R^{\frac{3(d+1)}{4}}}   \int_{B(0,2R)} \prod_{i=1}^k \left( \sum_{\Box_i \in \mathbb{B}_{i}} \chi_{T_{\Box_i}} \right)^{\frac{1}{k-1}} \, dx \\
&\lessapprox \prod_{i=1}^k \left\lvert  \mathbb{B}_i \right\rvert^{\frac{1}{k-1}}, \end{align*}
again by the $k$-linear Kakeya inequality. Combining this with \eqref{halfway} and \eqref{aim} gives 
\[ \left\lVert  \prod_{i=1}^k \left\lvert Ef_i \right\rvert^{1/k} \right\rVert _{L^{q}(Y)} \lessapprox  N^{-\frac{(k-1)\gamma}{k}} \prod_{i=1}^{k} \lVert f_i\rVert _2^{1/k}, \]
which finishes the proof.  \end{proof}

\section{Decay of conical averages} \label{conicalaverages}
This section gives a lower bound on the decay rate $\beta\big(\alpha,\Gamma^d\big)$ of the conical averages, followed by a proof of the relationship between $\beta\big(\alpha,\Gamma^d\big)$ and $s_d(\alpha, 2)$. The section concludes with an upper bound on $\beta\big(\alpha,\Gamma^d\big)$.

Analogously to Corollary~3.3 of \cite{du}, the following lower bound on $\beta\big(\alpha,\Gamma^d\big)$ follows from Theorem~\ref{strichartzr}; the proof is included for completeness.
\begin{corollary} \label{averages} For any $\alpha \in (0,d+1]$,
\[ \beta\big(\alpha,\Gamma^d\big) \geq \alpha -1 + \frac{d-\alpha}{d+1}. \]
\end{corollary}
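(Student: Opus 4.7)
The plan is to deduce this from Theorem \ref{strichartzr} by passing through an $L^2(\mu)$ Strichartz estimate and then dualising, following the scheme of Corollary 3.3 in \cite{du}. First, I claim it suffices to show
\[
\|Ef\|_{L^2(\mu)}^2 \lessapprox c_\alpha(\mu)\, R^{(\alpha+1)/(d+1)}\, \|f\|_2^2
\]
for every $\alpha$-dimensional Borel measure $\mu$ supported in $B(0,R) \subseteq \mathbb{R}^{d+1}$ and every $f \in L^2$ supported in the annulus $B(0,2)\setminus B(0,1)$. Indeed, given a compactly supported $\alpha$-dimensional $\nu$ in the unit ball of $\mathbb{R}^{d+1}$, apply the above to $\mu = T_{R\,*}\nu$ with $T_R(x)=Rx$, which lies in $B(0,R)$ and satisfies $c_\alpha(\mu) \asymp c_\alpha(\nu) R^{-\alpha}$ and $\|\mu\|=\|\nu\|$. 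Duality in $L^2(d\xi)$ applied to $f \mapsto \int \overline{Ef}\,d\mu$, combined with Cauchy--Schwarz in $d\mu$, then converts the displayed estimate into
\[
\int |\widehat{\nu}(R\xi)|^2 \, d\sigma_\Gamma(\xi) \lessapprox \|\nu\|\, c_\alpha(\nu)\, R^{-\alpha + (\alpha+1)/(d+1)} = \|\nu\|\, c_\alpha(\nu)\, R^{-\beta},
\]
after first partitioning $\Gamma$ into a finite number of pieces, each covered by a rotate of the half-cone $\Gamma_+$ used to define $E$.

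To prove the $L^2(\mu)$ estimate, the plan is to cover $B(0,R)$ by $R^{1/2} A^*\mathbb{Z}^{d+1}$-lattice cubes $Q$ and run the standard triple dyadic pigeonholing so that, on some subset $Y$ of $N$ cubes capturing a fraction $\approx 1$ of the $L^2(\mu)$ mass, the three quantities $\|Ef\|_{L^q(Q)}$, $\mu(Q)$, and the slab occupancy $\sigma$ of Theorem \ref{strichartzr} are all essentially constant; here $q = 2(d+1)/(d-1)$ and $\gamma = \tfrac{1}{2}-\tfrac{1}{q} = \tfrac{1}{d+1}$. The per-cube estimate is, by Hölder in $d\mu$ together with local constancy of $|Ef|^q$ at scale $1$,
\[
\int_Q |Ef|^2 \, d\mu \leq \mu(Q)^{1-2/q}\|Ef\|_{L^q(\mu|Q)}^2 \lesssim \mu(Q)^{1-2/q}\, c_\alpha(\mu)^{2/q}\, \|Ef\|_{L^q(Q^*)}^2 \lesssim c_\alpha(\mu)\, R^{\alpha\gamma}\, \|Ef\|_{L^q(Q^*)}^2,
\]
where the middle step subdivides $Q$ into unit subcubes $q$ and absorbs $\mu(q) \leq c_\alpha(\mu)$ on each one, and the last step uses the Frostman bound $\mu(Q) \leq c_\alpha(\mu) R^{\alpha/2}$.

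Summing over $Q \in Y$, Hölder on the discrete sum gives $\sum_Q \|Ef\|_{L^q(Q)}^2 \leq N^{1-2/q}\|Ef\|_{L^q(Y)}^2$, and Theorem \ref{strichartzr} bounds this by $(N/\sigma)^{2\gamma}\|f\|_2^2$ up to $R^\epsilon$. Since only $\lesssim R^{1/2}$ slabs of the prescribed form meet $B(0,R)$, one has $N/\sigma \lesssim R^{1/2}$, yielding an additional factor $R^\gamma$. Combining this with the per-cube factor $R^{\alpha\gamma}$ produces the exponent $(\alpha+1)\gamma = (\alpha+1)/(d+1)$, which is exactly what is required. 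The main technical nuisance will be the local-constancy step: since $Ef$ has frequency support of diameter $O(1)$, the natural locally-constant scale is $1$ rather than $R^{1/2}$, so the passage from $L^q(\mu|Q)$ to $L^q(Q,dx)$ must be done by subdividing into unit subcubes and paying one factor of $c_\alpha(\mu)$; the rest is dyadic pigeonholing, the triangle inequality, and an application of Theorem \ref{strichartzr}.
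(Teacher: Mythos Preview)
Your proposal is correct and follows essentially the same approach as the paper's proof: both dualise the conical average to an extension estimate against the rescaled measure, pigeonhole the $R^{1/2}$-cubes so that Theorem~\ref{strichartzr} applies, combine the refined Strichartz gain $\sigma^{-\gamma}$ with the bound $N/\sigma \lesssim R^{1/2}$ coming from the slab count, and pay $R^{\alpha\gamma}$ from the Frostman condition on $R^{1/2}$-cubes. The only packaging differences are that the paper passes through $\|Ef\|_{L^1(\mu_R)}$ and the mollified density $H = |\widecheck{\psi}|\ast\mu_R$ (so that the local-constancy step is built into $H$), whereas you go directly to $\|Ef\|_{L^2(\mu)}$ and handle local constancy by unit-cube subdivision; also, your pigeonholing on $\mu(Q)$ is unnecessary---it is cleaner to apply the per-cube bound first and then pigeonhole only on $\|Ef\|_{L^q(Q)}$ and $\sigma$, which automatically makes one bin dominate.
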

\begin{proof} Without loss of generality, it may be assumed that $\Gamma$ is restricted to $\xi_d \geq 0$. Recall that $\pi: \mathbb{R}^{d+1} \to \mathbb{R}^d$ removes the last coordinate. If $\mu$ is a Borel measure supported in the unit ball with $c_{\alpha}(\mu) < \infty$, then for any $R>0$, 
\begin{equation} \label{duality} \left(\int \left\lvert \widehat{\mu}(R\xi) \right\rvert^2 \, d\sigma_{\Gamma}(\xi) \right)^{1/2} = \sup_{\lVert f\rVert _2 = 1} \left\lvert  \int \widehat{\mu}(R \xi) \left(f \circ \pi\right)(\xi) \, d\sigma_{\Gamma}(\xi) \right\rvert. \end{equation}
For any fixed $f$ supported in $\{\xi_d \geq 0\} \cap B(0,2) \setminus B(0,1)$ with $\lVert f\rVert _2=1$, 
\begin{align} \notag \left\lvert  \int \widehat{\mu}(R \xi) \left(f \circ \pi\right)(\xi) \, d\sigma_{\Gamma}(\xi) \right\rvert&= \left\lvert  \int \widehat{\mu}(\xi) \left(f \circ \pi\right)(\xi/R) \, dR_{\#}\sigma_{\Gamma}(\xi) \right\rvert \\
\notag &= \left\lvert  \int Ef(-Rx,-Rt) \, d\mu(x,t) \right\rvert \\
\label{duality2} &\leq R^{-\alpha} \lVert Ef\rVert _{L^1(\mu_R)},  \end{align}
where $\mu_R(E) := R^{\alpha}\mu(-R^{-1}E)$ for any Borel set $E$. The measure $\mu_R$ satisfies $c_{\alpha}(\mu_R) \leq c_{\alpha}(\mu)$. If $\psi$ is a fixed non-negative Schwartz function equal to 1 on the ball $B\left( 0, 3\sqrt{2}\right) \supseteq \Gamma$, and vanishing outside a slightly larger ball, then $\widehat{Ef} = \widehat{Ef}  \psi$, which implies that $\lvert Ef\rvert \leq \lvert Ef\rvert \ast \left\lvert  \widecheck{\psi} \right\rvert$. Hence by Fubini and the Schwartz decay of $\widecheck{\psi}$, 
\[ \lVert Ef\rVert _{L^1(\mu_R)} \lesssim_N \int_{B(0,2R)} \left\lvert  Ef\right\rvert H \, dx + R^{-N} \lVert f\rVert _2, \quad \text{where} \quad H := \left\lvert \widecheck{\psi}\right\rvert \ast \mu_R. \]
By the uncertainty principle, the function $H$ satisfies 
\[ \lVert H\rVert _{\infty} \lesssim c_{\alpha}(\mu), \quad \lVert H\rVert _1 \lesssim \lVert \mu\rVert R^{\alpha} , \quad \text{and} \quad \int_{B((x,t),r)} H \, dy \lesssim c_{\alpha}(\mu) r^{\alpha}, \]
for every $(x,t) \in \mathbb{R}^{d+1}$ and $r>0$ (see e.g.~Lemma~4.1 in \cite{erdogan3}; for $r< 1$ the third inequality follows from the first one). Cover $B(0,2R)$ with $\sqrt{2R} A^* \mathbb{Z}^{d+1}$-lattice cubes $Q$, and sort each cube $Q$ into sets $\mathbb{S}_{\lambda}$ according to the dyadic value $\lambda$ of $\lVert Ef\rVert _{L^q(Q)}$. For each $\lambda$, sort the cubes $Q$ in $\mathbb{S}_{\lambda}$ further according to the number of cubes $\sim \sigma$ in $\mathbb{S}_{\lambda}$ lying in the $(2R)^{1/2}$-slab parallel to $A^*\left( \mathbb{R}^d \times \{0\} \right)$ which contains $Q$. Since there are $\lesssim \left( \log R \right)^2$ significant dyadic pairs $(\lambda, \sigma)$, by standard pigeonholing and the triangle inequality there exists a pair $(\lambda, \sigma)$ with associated cubes $Y = \bigcup Q$ such that 
\[ \left\lVert  Ef H^{1/2} \right\rVert _{L^2(B(0,2R))} \lessapprox_N \left\lVert  Ef H^{1/2} \right\rVert _{L^2(Y)}+ c_{\alpha}(\mu)^{1/2} R^{-N} \|f\|_2. \]
Therefore, with $\gamma= \frac{1}{2}- \frac{1}{q}$, applying Theorem~\ref{strichartzr} gives 
\begin{align*} &\int_{B(0,2R)} \left\lvert  Ef\right\rvert H \, dx \\
&\quad\lesssim \lVert \mu\rVert ^{1/2} R^{\alpha/2} \left\lVert  Ef H^{1/2} \right\rVert _{L^2(B(0,2R))} \\
&\quad\lessapprox_N \lVert \mu\rVert ^{1/2}  R^{\alpha/2} \left\lVert  Ef H^{1/2} \right\rVert _{L^2(Y)} + \|\mu\|^{1/2} c_{\alpha}(\mu)^{1/2} R^{-N} \|f\|_2 \\
&\quad\leq \lVert \mu\rVert ^{1/2}  R^{\alpha/2}  \left\lVert  Ef \right\rVert _{L^q(Y)} \left\lVert  H^{1/2} \right\rVert _{L^{1/\gamma}(Y)} + \|\mu\|^{1/2} c_{\alpha}(\mu)^{1/2} R^{-N} \|f\|_2  \\
&\quad\lessapprox \lVert \mu\rVert ^{1/2}c_{\alpha}(\mu)^{1/2}  R^{\alpha/2}  \sigma^{-\gamma} \lVert f\rVert _2 \left( R^{1/2} \sigma  R^{\alpha/2} \right)^{\gamma}\\ 
&\quad=  \lVert \mu\rVert ^{1/2}c_{\alpha}(\mu)^{1/2} R^{ \frac{1}{2} \left( \alpha + \gamma + \alpha \gamma\right) }\lVert f\rVert _2. \end{align*}
Substituting this into \eqref{duality} and \eqref{duality2} yields
\[ \int \left\lvert \widehat{\mu}(R\xi) \right\rvert^2 \, d\sigma_{\Gamma}(\xi) \lessapprox \lVert \mu\rVert c_{\alpha}(\mu) R^{- \alpha+ \gamma + \alpha \gamma } = \lVert \mu\rVert c_{\alpha}(\mu) R^{ -\left(\alpha-1 + \frac{d-\alpha}{d+1}\right)}, \] 
which proves the lower bound. \end{proof}

\begin{corollary} \label{fractalstrichartz}For any $\alpha \in (0,d+1]$,
 \[ s_d(\alpha,2) \leq \frac{d}{2} - \frac{1}{2} \left(\alpha-1+ \frac{d-\alpha}{d+1} \right). \]
\end{corollary}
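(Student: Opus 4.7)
The plan is simply to combine Corollary \ref{averages} with the equivalence \eqref{beta-s} between $\beta\big(\alpha,\Gamma^d\big)$ and $s_d(\alpha,2)$. Corollary \ref{averages} gives the lower bound
\[ \beta\big(\alpha,\Gamma^d\big) \geq \alpha -1 + \frac{d-\alpha}{d+1}, \]
and the relation \eqref{beta-s}, established in Proposition \ref{relationship}, rearranges to
\[ s_d(\alpha,2) = \frac{d - \beta\big(\alpha,\Gamma^d\big)}{2}. \]
Substituting the first inequality into this identity yields the claimed upper bound. So the core of the argument reduces to applying a previously stated equivalence, and the only thing needing any care is to confirm that a \emph{lower} bound on $\beta$ translates into an \emph{upper} bound on $s_d$; this is precisely the direction of \eqref{beta-s} being invoked.

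Since Proposition \ref{relationship} is cited but established elsewhere in the paper, the actual work in this corollary is limited to the substitution. Any subtlety lies in matching the constants $\|\mu\| c_\alpha(\mu)$ appearing on the right of \eqref{coneaverages} with the constant $C(\mu) = \|\mu\|^{1/q-1/2} c_\alpha(\mu)^{1/2}$ appearing in the definition of $s_d(\alpha,2)$; for $q=2$ this becomes $C(\mu) = c_\alpha(\mu)^{1/2}$, and the square roots of the factors in \eqref{coneaverages} reproduce the correct dependence through Parseval and duality between the $L^2(\mu)$ side and the $L^2(\sigma_\Gamma)$ side, exactly as carried out in the duality step of the proof of Corollary \ref{averages}.

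The main (and essentially only) obstacle is therefore not here but in Proposition \ref{relationship}, which is assumed. Once that is in hand, the proof is a one-line algebraic substitution. I would present the argument in two sentences: invoke Corollary \ref{averages}, then apply \eqref{beta-s} to convert the lower bound on $\beta\big(\alpha,\Gamma^d\big)$ into the stated upper bound on $s_d(\alpha,2)$.
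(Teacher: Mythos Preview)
Your proposal is correct and matches the paper's approach exactly: the paper states that Corollary~\ref{fractalstrichartz} is a consequence of Corollary~\ref{averages} combined with the relation $\beta\big(\alpha,\Gamma^d\big) = d - 2s_d(\alpha,2)$ from Proposition~\ref{relationship}, with no further argument given. Your two-sentence summary is precisely what the paper does.
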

Corollary~\ref{fractalstrichartz} is a consequence of Corollary~\ref{averages} combined with the following relation between $s_d(\alpha,2)$ and $\beta\big(\alpha,\Gamma^d\big)$ from \cite{wolff}, the proof is included here for convenience, but is essentially the same as the one in \cite{rogers}.

\begin{proposition} \label{relationship} For any $\alpha \in (0,d+1]$, 
\[ \beta\big(\alpha,\Gamma^d\big) = d-2s_d(\alpha,2). \]

\end{proposition}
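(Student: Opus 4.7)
The plan is to establish the equality via two reverse inequalities, both using the $TT^*$ reformulation of the Strichartz inequality at $q=2$. Because $C(\mu)=c_\alpha(\mu)^{1/2}$ at $q=2$, because any wave solution splits into two half-cone extensions, and because for data at frequency $R$ one has $\|u_0\|_{H^s}^2+\|u_1\|_{H^{s-1}}^2\sim R^{2s}\|a\|_2^2$ (with $a$ the cone amplitude), the Strichartz inequality (\ref{strichartz}) reduces at each dyadic scale $R$ to
\[
\|E_R a\|_{L^2(\mu)}^2 \lesssim R^{2s}c_\alpha(\mu)\|a\|_2^2,
\]
where $E_R$ is the half-cone extension restricted to $\{R\leq|\xi|\leq 2R\}$. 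Writing $E_R^* g(\xi)=\widehat{g\mu}(\xi,|\xi|)$, this is equivalent via $TT^*$ to
\[
\int_{R\leq|\xi|\leq 2R}|\widehat{g\mu}(\xi,|\xi|)|^2\,d\xi\lesssim R^{2s}c_\alpha(\mu)\|g\|_{L^2(\mu)}^2 \quad \text{for all } g\in L^2(\mu),
\]
and the change of variables $\xi=R\eta$ rewrites the left side as $R^d\int_\Gamma|\widehat{g\mu}(R\eta)|^2\,d\sigma_\Gamma(\eta)$.

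For the direction $\beta(\alpha,\Gamma^d)\geq d-2s_d(\alpha,2)$, the plan is to fix any $s>s_d(\alpha,2)$ and specialize the displayed operator estimate to $g\equiv 1$, for which $\|g\|_{L^2(\mu)}^2=\|\mu\|$. Dividing by $R^d$ then yields $\int_\Gamma|\widehat\mu(R\eta)|^2\,d\sigma_\Gamma\lesssim R^{2s-d}c_\alpha(\mu)\|\mu\|$, so $\beta\geq d-2s$ for every such $s$, and sending $s\searrow s_d(\alpha,2)$ completes this inequality.

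For the reverse direction $\beta(\alpha,\Gamma^d)\leq d-2s_d(\alpha,2)$, the plan is to start from the hypothesis $\int_\Gamma|\widehat\nu(R\eta)|^2\,d\sigma_\Gamma\lesssim R^{-\beta}c_\alpha(\nu)\|\nu\|$ (uniform over admissible $\nu$) and deduce the full operator estimate at $s=(d-\beta)/2$. Given $g\in L^2(\mu)$, one applies the hypothesis to the complex measure $\nu:=g\mu$: its total variation satisfies $\|\nu\|\leq\|\mu\|^{1/2}\|g\|_{L^2(\mu)}$, and its Frostman content obeys $|\nu|(B(x,r))\leq c_\alpha(\mu)^{1/2}r^{\alpha/2}\|g\|_{L^2(\mu)}$, both by Cauchy--Schwarz. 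A mollification of $\nu$ at scale $R^{-1}$ renders the $c_\alpha$ constant finite while leaving $\widehat\nu$ essentially unchanged on $R\Gamma$, and the resulting estimate, combined with $\|\mu\|\leq c_\alpha(\mu)$ (valid because $\mu$ is supported in the unit ball), is reassembled into the required operator estimate at $s=(d-\beta)/2$.

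The hard part is this reverse direction: the conical-average hypothesis is directly available only for the diagonal test $g\equiv 1$ (applied to $\nu=\mu$ itself), whereas the equivalent Strichartz formulation demands the full operator bound over all $g\in L^2(\mu)$. The upgrade relies essentially on the piecewise normalization $C(\mu)=c_\alpha(\mu)^{1/2}$ at $q=2$, with no $\|\mu\|$-factor on the right, as highlighted in the discussion immediately preceding the proposition statement.
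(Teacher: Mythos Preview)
Your forward direction ($\beta \geq d-2s_d(\alpha,2)$) is correct and in fact slightly cleaner than the paper's: the paper passes through $q=1$ and then uses $s_d(\alpha,1)\le s_d(\alpha,2)$, whereas your $TT^*$ argument with $g\equiv 1$ reaches $q=2$ directly.

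The reverse direction, however, has a genuine gap. You propose to apply the conical-average hypothesis to $\nu=g\mu$. As you correctly note, Cauchy--Schwarz only gives $|\nu|(B(x,r))\le c_\alpha(\mu)^{1/2}r^{\alpha/2}\|g\|_{L^2(\mu)}$, so $\nu$ is merely $(\alpha/2)$-dimensional. Mollifying at scale $R^{-1}$ does make $c_\alpha(\nu_R)$ finite, but quantitatively one gets
\[
c_\alpha(\nu_R)\lesssim R^{\alpha/2}\,c_\alpha(\mu)^{1/2}\|g\|_{L^2(\mu)},
\]
since the worst scale is $r\sim R^{-1}$, where $\nu_R(B(x,r))/r^\alpha\sim r^{-\alpha/2}\cdot c_\alpha(\mu)^{1/2}\|g\|_{L^2(\mu)}$. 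Plugging this and $\|\nu\|\le\|\mu\|^{1/2}\|g\|_{L^2(\mu)}$ into the hypothesis yields
\[
\int_\Gamma|\widehat{g\mu}(R\xi)|^2\,d\sigma_\Gamma \lesssim R^{-\beta+\alpha/2}\,c_\alpha(\mu)\,\|g\|_{L^2(\mu)}^2,
\]
which only gives $s_d(\alpha,2)\le (d-\beta)/2+\alpha/4$, not $(d-\beta)/2$. The loss is unavoidable on this route: for concentrated $g$ the measure $g\mu$ genuinely has worse Frostman behaviour than $\mu$, and the decay hypothesis charges you for it through the factor $c_\alpha(\nu)$.

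The paper avoids this by never applying the hypothesis to $g\mu$. Instead it dualises to obtain, for every positive measure $\nu$ in the unit ball,
\[
\|Eg\|_{L^1(\nu)}\lessapprox R^{(d-\beta)/2}\|\nu\|^{1/2}c_\alpha(\nu)^{1/2}\|g\|_2,
\]
and then takes $\nu=\mu_\lambda$, the \emph{restriction} of $\mu$ to the superlevel set $\{|Eg|>\lambda\}$, normalised to a probability measure. The key point is that restriction does not worsen the Frostman constant: $c_\alpha(\mu_\lambda)\le c_\alpha(\mu)/\mu\{|Eg|>\lambda\}$ with no $R$-dependent loss. This yields the weak-type bound $\mu\{|Eg|>\lambda\}\lessapprox c_\alpha(\mu)R^{d-\beta}\|g\|_2^2/\lambda^2$, which is then integrated (using $\|Eg\|_\infty\lesssim R^{d/2}$ to truncate) to give the strong $L^2(\mu)$ estimate at exponent $(d-\beta)/2$. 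You should replace your mollification step with this level-set argument.
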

\begin{proof} Assume now that $E$ denotes the extension operator for the unrestricted cone, and let $\mu$ be a finite Borel measure with support in the unit ball. For any $R \geq 1$ and fixed $f$ with $\supp f \subseteq B(0,2) \setminus B(0,1)$ and $\lVert f\rVert _2=1$, 
\begin{align*} \left\lvert  \int \widehat{\mu}(R \xi) \left(f \circ \pi\right)(\xi) \, d\sigma_{\Gamma}(\xi) \right\rvert&= \left\lvert  \int \widehat{\mu}(\xi) \left(f \circ \pi\right)(\xi/R) \, dR_{\#}\sigma_{\Gamma}(\xi) \right\rvert \\
&= R^{-d}\left\lvert  \int Ef_R \, d\mu \right\rvert \\
&\leq R^{-d} \left\lVert Ef_R \right\rVert _{L^1(\mu)},  \end{align*}
where $f_R(\xi) := f(-\xi/R)$. Since $\supp f_R \subseteq B(0,2R) \setminus B(0,R)$, applying the definition of $s_d(\alpha,1)$ to the wave equation with solution $Ef_R$ gives
\[  R^{-d}\left\lVert Ef_R \right\rVert _{L^1(\mu)} \lessapprox R^{- \left( \frac{d}{2} -s_d(\alpha,1) \right) } \left\lVert \mu\right\rVert ^{1/2}c_{\alpha}(\mu)^{1/2}. \]
Taking the supremum over $\lVert f\rVert _2 =1$ with $\supp f \subseteq B(0,2) \setminus B(0,1)$ yields
\[ \int \left\lvert \widehat{\mu}(R \xi)\right\rvert^2 \, d\sigma_{\Gamma}(\xi) \lessapprox R^{- \left(d-2s_d(\alpha,1) \right) } \left\lVert \mu\right\rVert  c_{\alpha}(\mu). \]
This proves the inequality $\beta\big(\alpha,\Gamma^d\big) \geq d-2s_d(\alpha,1)$. But $s_d(\alpha,1) \leq s_d(\alpha,2)$ by Cauchy-Schwarz, and so
\[ \beta\big(\alpha,\Gamma^d\big) \geq d-2s_d(\alpha,2). \]

To prove this inequality in the reverse direction it suffices to show that $\left\lVert  E \widehat{f} \right\rVert _{L^2\left(\mu\right)} \lesssim_{\epsilon} c_{\alpha}(\mu)^{1/2} \lVert f\rVert _{H^{\frac{d-\beta(\alpha,\Gamma^d)}{2}+\epsilon}}$ for any $\epsilon >0$, since any solution to the wave equation can be expressed through the extension operator. Let $\epsilon >0$ and $f$ be given, and for each fixed dyadic $R \geq 1$ let $g= \widehat{f} \chi_{B(0,2R) \setminus B(0,R) }$. The characterisation of the $L^2(\mu)$ norm through the distribution function is
\begin{equation} \label{stars2} \int \left\lvert  Eg \right\rvert^2 \, d\mu = 2\int_0^{\infty} \lambda \mu\left\{ \lvert Eg\rvert > \lambda \right\} \, d\lambda. \end{equation}
For each $\lambda>0$, define the probability measure $\mu_{\lambda}$ by
\[ \mu_{\lambda}(F) = \frac{ \mu\left( F \cap \left\{ \lvert Eg\rvert > \lambda \right\}\right)}{\mu\left\{ \lvert Eg\rvert > \lambda \right\}}, \]
for any Borel set $F$ (assuming $\lambda$ is not so large that the denominator vanishes). 

For any finite Borel measure $\nu$ supported in the unit ball, 
\begin{multline*} \left\lvert  \int Eg \, d\nu \right\rvert = \left\lvert  \int (\overline{g} \circ \pi)(\xi)  \widehat{\nu}(\xi)  \, d\sigma_{\Gamma}(\xi) \right\rvert \\
\lessapprox  \left(  \left\lVert \nu \right\rVert  c_{\alpha}\left(\nu\right)  R^{d-\beta(\alpha,\Gamma^d)}\right)^{1/2}\lVert g\rVert_2 =   \left\lVert \nu \right\rVert ^{1/2} c_{\alpha}\left(\nu\right)^{1/2} R^{\frac{d-\beta(\alpha,\Gamma^d)}{2}}\lVert g\rVert_2. \end{multline*}
By repeating this with $\nu$ replaced by the restriction of $\nu$ to a set where $\arg Eg$ is essentially constant, the absolute value can be moved inside the integral to get
\[ \left\lVert Eg\right\rVert _{L^1\left(\nu \right)} \lessapprox  \left\lVert \nu \right\rVert^{1/2} c_{\alpha}\left(\nu\right)^{1/2} R^{\frac{d-\beta(\alpha,\Gamma^d)}{2}}  \lVert g\rVert_2. \]
Taking $\nu = \mu_{\lambda}$ gives
\begin{align}\notag \lambda &\leq \left\lVert  Eg\right\rVert _{L^1\left(\mu_{\lambda}\right)} \\
\notag &\lessapprox c_{\alpha}\left(\mu_{\lambda}\right)^{1/2} R^{\frac{d-\beta(\alpha,\Gamma^d)}{2}} \lVert g\rVert _2  \\
\label{weak} &\leq \left( \frac{ c_{\alpha}(\mu)}{ \mu\left\{ \lvert Ef\rvert > \lambda \right\} } \right)^{1/2} R^{\frac{d-\beta(\alpha,\Gamma^d)}{2}} \lVert g\rVert _2. \end{align}
Assume for the moment that $\lVert g\rVert _2 =1$. By \eqref{stars2} and the trivial bound $\lVert Eg\rVert _{\infty} \lesssim R^{d/2}$,
\begin{align} \notag \int \left\lvert  Eg \right\rvert^2 \, d\mu &= 2\int_0^{R^{\frac{d-\beta(\alpha,\Gamma^d)}{2}}} \lambda \mu\left\{ \lvert Eg\rvert > \lambda \right\} \, d\lambda +  2\int_{R^{\frac{d-\beta(\alpha,\Gamma^d)}{2}}}^{CR^{d/2}} \lambda \mu\left\{ \lvert Eg\rvert > \lambda \right\} \, d\lambda \\
\label{stars3} &\lessapprox R^{d-\beta(\alpha,\Gamma^d)} c_{\alpha}(\mu)\lVert g\rVert _2^2, \end{align}
where the bound on the first term follows from $\|\mu\| \leq c_{\alpha}(\mu)$, and the second comes from \eqref{weak}. By scaling this holds even if $\lVert g\rVert _2 \neq 1$. Applying the triangle inequality to \eqref{stars3} gives
\begin{multline*} \left\lVert  E \widehat{f} \right\rVert _{L^2\left(\mu\right)} \lesssim_{\epsilon} c_{\alpha}(\mu)^{1/2} \left\lVert  \widehat{f} \chi_{B(0,1) } \right\rVert _2  \\
+ \sum_{\substack{j=0 \\ R = 2^j}}^{\infty} R^{\frac{d-\beta(\alpha,\Gamma^d)+\epsilon}{2}} c_{\alpha}(\mu)^{1/2} \left\lVert  \widehat{f} \chi_{B(0,2R) \setminus B(0,R) }\right\rVert _2. \end{multline*}
The $\ell^2$ Cauchy-Schwarz inequality then yields
\begin{align*} \left\lVert  E \widehat{f} \right\rVert _{L^2\left(\mu\right)} \lesssim_{\epsilon} c_{\alpha}(\mu)^{1/2} \lVert f\rVert _{H^{\frac{d-\beta(\alpha,\Gamma^d)}{2}+\epsilon}}. \end{align*}
Letting $\epsilon \to 0$ gives
\[ \beta\big(\alpha,\Gamma^d\big) \leq d - 2s_d(\alpha, 2), \]
which proves the proposition. \end{proof}

The following calculation suggests that if the class of measures is restricted, there is no relation between conical averages and fractal Strichartz inequalities; for tensor products $\mu \otimes dm$ the optimal fractal Strichartz inequalities are unknown in general \cite{rogers}. 
\begin{proposition} \label{trivial} If $\nu = \mu \otimes \lambda$, where $\mu$ is a Borel measure with compact support in the unit ball of $\mathbb{R}^d$ and $d\lambda = \chi_{[0,1]} \, dm$, where $m$ is the Lebesgue measure on $\mathbb{R}$, then for $0< \alpha < d$, 
\begin{equation} \label{optimal} \int \left\lvert  \widehat{\nu}(R \xi) \right\rvert^2 \, d\sigma_{\Gamma}(\xi) \lesssim I_{\alpha}(\mu) R^{-(\alpha+2)} \quad \text{for all $R >0$,} \end{equation}
where $I_{\alpha}(\mu)$ is defined by \eqref{energydef}. 
\end{proposition}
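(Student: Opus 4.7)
The plan is to factorize $\widehat{\nu}$ as a tensor product, exploit the decay of $\widehat{\lambda}$ to gain a factor of $R^{-2}$ on the cone, and then convert the remaining $L^2$ annular integral of $\widehat{\mu}$ into the energy $I_{\alpha}(\mu)$ via the Fourier formula \eqref{energydef}.

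First I would write, for any $(\xi, t) \in \mathbb{R}^{d+1}$,
\[ \widehat{\nu}(\xi, t) = \widehat{\mu}(\xi) \, \widehat{\lambda}(t), \]
and observe that $\widehat{\lambda}(t) = \int_0^1 e^{-2 \pi i t s} \, ds$ satisfies the elementary bound $|\widehat{\lambda}(t)| \lesssim \min(1, |t|^{-1})$. Parametrizing $\sigma_\Gamma$ by the map $\eta \mapsto (\eta, |\eta|)$ with $1 \leq |\eta| \leq 2$, I would plug in the evaluation point $R\xi = (R\eta, R|\eta|)$ to get
\[ \int \left| \widehat{\nu}(R\xi) \right|^2 d\sigma_{\Gamma}(\xi) = \int_{1 \leq |\eta| \leq 2} \left| \widehat{\mu}(R\eta) \right|^2 \left| \widehat{\lambda}(R|\eta|) \right|^2 d\eta. \]
Since $R|\eta| \geq R$ on the domain of integration, the bound on $\widehat{\lambda}$ gives $|\widehat{\lambda}(R|\eta|)|^2 \lesssim R^{-2}$, which accounts for two of the desired powers of $R$.

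Next I would change variables $\zeta = R\eta$ (introducing a factor $R^{-d}$) to obtain
\[ \int \left| \widehat{\nu}(R\xi) \right|^2 d\sigma_{\Gamma}(\xi) \lesssim R^{-d-2} \int_{R \leq |\zeta| \leq 2R} \left| \widehat{\mu}(\zeta) \right|^2 d\zeta. \]
On the annulus $R \leq |\zeta| \leq 2R$ we have $|\zeta|^{\alpha - d} \gtrsim R^{\alpha - d}$ regardless of the sign of $\alpha - d$, so multiplying and dividing by $|\zeta|^{\alpha - d}$ and then extending the integration domain to all of $\mathbb{R}^d$ yields
\[ \int_{R \leq |\zeta| \leq 2R} \left| \widehat{\mu}(\zeta) \right|^2 d\zeta \lesssim R^{d - \alpha} \int_{\mathbb{R}^d} |\zeta|^{\alpha - d} \left| \widehat{\mu}(\zeta) \right|^2 d\zeta \lesssim R^{d-\alpha} I_{\alpha}(\mu), \]
by the Fourier representation of the energy from \eqref{energydef}.

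Combining these estimates gives exactly
\[ \int \left| \widehat{\nu}(R\xi) \right|^2 d\sigma_{\Gamma}(\xi) \lesssim R^{-d-2} \cdot R^{d-\alpha} I_\alpha(\mu) = I_\alpha(\mu) R^{-(\alpha + 2)}, \]
which is \eqref{optimal}. There is no real obstacle here; the calculation is essentially bookkeeping, and the only mild point of care is to verify that $|\zeta|^{\alpha-d} \gtrsim R^{\alpha-d}$ on the relevant annulus in both the $\alpha \leq d$ and $\alpha > d$ cases so the argument works uniformly for all $\alpha \in (0, d+1]$.
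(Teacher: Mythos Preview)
Your proof is correct and follows essentially the same route as the paper's: factor $\widehat{\nu}(\xi,|\xi|)=\widehat{\mu}(\xi)\widehat{\chi_{[0,1]}}(|\xi|)$, use $|\widehat{\chi_{[0,1]}}(|\xi|)|^2\lesssim |\xi|^{-2}$ on the annulus, rescale, and compare with the Fourier energy integral. The only cosmetic difference is that the paper changes variables before applying the decay of $\widehat{\lambda}$, whereas you do these two steps in the reverse order; the content is identical.
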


\begin{proof} The left hand side of \eqref{optimal} is
\begin{align*} \int_{\Gamma} \lvert \widehat{\nu}(R \xi)\rvert^2 \,d\sigma_{\Gamma} &= \int_{B(0,2) \setminus B(0,1)} \lvert \widehat{\nu}(R \xi, R\rvert\xi\lvert )\rvert^2 \, d\xi \\
&= R^{-d} \int_{B(0,2R) \setminus B(0,R)} \lvert \widehat{\nu}(\xi,\rvert\xi\lvert )\rvert^2 \, d\xi \\
&= R^{-d} \int_{B(0,2R) \setminus B(0,R)} \lvert \widehat{\mu}(\xi)\rvert^2\left\lvert \widehat{ \chi_{[0,1]}}(\lvert \xi\rvert)\right\rvert^2 \, d\xi \\
&\lesssim R^{-d} \int_{B(0,2R) \setminus B(0,R)} \lvert \widehat{\mu}(\xi)\rvert^2  \lvert \xi\rvert^{-2} \, d\xi \\
&\sim R^{-(\alpha+2)} \int_{B(0,2R) \setminus B(0,R)} \lvert \xi\rvert^{\alpha-d} \lvert \widehat{\mu}(\xi)\rvert^2 \, d\xi  \\
&\lesssim R^{-(\alpha+2)} I_{\alpha}(\mu). \end{align*}
This proves the proposition. \end{proof}
 
Returning to the case of a general measure, the following upper bound for the decay of the conical averages is based on the counterexample for the spherical case from \cite{luca}. 
\begin{proposition} For $d \geq 5$ and $\alpha \in (0, d+1)$, 
\[ \beta\big(\alpha,\Gamma^d\big) \leq \alpha-1 +  \frac{2(d+1-\alpha)}{d+1}.  \]
\end{proposition}

\begin{proof} Let $\epsilon, \kappa \in (0,1)$ be constants to be chosen later and let $R>1$. Define
\[ \Lambda =  \left( R^{\kappa-1} \mathbb{Z}^{d+1} + B(0,\epsilon R^{-1}) \right) \cap B(0,1), \]
and let $\mu$ be the Lebesgue measure on $\mathbb{R}^{d+1}$ restricted to $\Lambda$.

Assume that $R$ is large enough to ensure that $R^{ \kappa} > 2$, which makes the balls in $\Lambda$ disjoint. If $0 < r \leq \epsilon R^{-1}$ then 
\[ \frac{ \mu(B(x,r)) }{r^{\alpha} } \lesssim  r^{d+1-\alpha} \lesssim R^{\alpha-(d+1)}. \]

If $\epsilon R^{-1} < r \leq R^{ \kappa-1}$ then 
\[ \frac{ \mu(B(x,r)) }{r^{\alpha} } \lesssim  (\epsilon R^{-1})^{(d+1)-\alpha} \sim R^{\alpha-(d+1)}. \]
If $R^{\kappa-1} < r \leq 1$, then choose a positive integer $N$ so that $r \sim NR^{\kappa -1}$, so that $N \lesssim R^{1- \kappa}$. Then 
\[ \frac{ \mu(B(x,r)) }{r^{\alpha} } \lesssim  \frac{ N^{d+1} (\epsilon R^{-1} )^{d+1} }{N^{\alpha} R^{ (\kappa-1) \alpha}} \lesssim R^{-\kappa (d+1)}. \]
Since $\mu$ is supported in the unit ball, this gives
\[ c_{\alpha}(\mu) \lesssim \max\left( R^{-\kappa (d+1)}, R^{\alpha-(d+1)} \right). \]
Choose $\kappa \in (0,1)$ so that $\kappa (d+1) = (d+1)-\alpha$.  Then 
\[ \lVert \mu\rVert  \lesssim R^{\alpha-(d+1)}, \quad c_{\alpha}(\mu) \lesssim R^{\alpha-(d+1)}. \]
By Plancherel, Cauchy-Schwarz and the definition of $\beta\big(\alpha,\Gamma^d\big)$, every function $f$ on the cone with $\lVert f\rVert _{L^2\left(\sigma_{\Gamma} \right)} \leq 1$ satisfies
\begin{equation} \label{decaydefn} \left\lvert  \int \widehat{ f \sigma_{\Gamma}}(Rx) \, d\mu(x) \right\rvert \lessapprox R^{\alpha-(d+1)} R^{-\beta(\alpha,\Gamma^d)/2}; \end{equation}
a suitably chosen $f$ will give the upper bound on $\beta(\alpha,\Gamma^d)$. 

Let 
\[ E = \left\{ (\xi, \lvert \xi\rvert) \in \Gamma^d : R^{\kappa} (\xi, \lvert \xi\rvert )  \in \mathbb{Z}^{d+1} \right\}. \]
By adding up the lattice points on each of the $\sim R^{\kappa}$ relevant spheres in $\mathbb{R}^d$ of integer radius $\sim R^{\kappa}$, and applying the number theoretic estimate which says that for $d\geq 5$ there are $\sim R^{\kappa(d-2)}$ lattice points on each sphere \cite[Theorem~20.2]{iwaniec}, the cardinality of $E$ satisfies
\begin{equation} \label{numbertheory} \lvert E\rvert \sim R^{\kappa(d-1)}. \end{equation}
Let 
\[ \Omega  = \left\{ \xi \in \Gamma^d : \dist(\xi, E) \leq \epsilon R^{-1} \right\}, \quad f = \frac{ \chi_{\Omega} }{ \lVert \chi_{\Omega}\rVert _{L^2(\sigma_{\Gamma})}}. \] 
Then for large $R$, 
\[ \sigma_{\Gamma}( \Omega ) \sim  \lvert E\rvert R^{-d}. \]
Let $F = \mathbb{Z}^{d+1} \cap B(0, R^{1-\kappa})$, so that
\begin{multline*} \lVert \chi_{\Omega}\rVert _{L^2(\sigma_{\Gamma})} \int \widehat{ f \sigma_{\Gamma}}(Rx) \, d\mu(x) = \int_{B(0,1)} \int_{\Gamma} e^{ -2\pi i \langle \xi , Rx \rangle } \chi_{\Omega}( \xi) \, d\sigma_{\Gamma}(\xi) \, d\mu(x) \\
= \sum_{n \in F}\sum_{\omega \in E}  \int_{B(R^{\kappa-1}n ,\epsilon R^{-1})} \int_{\Gamma \cap B(\omega,\epsilon R^{-1})} e^{ -2\pi i \langle \xi , Rx \rangle }  \, d\sigma_{\Gamma}(\xi) \, d\mu(x). \end{multline*}
For $(\xi,x)$ in the domain of integration, and the corresponding $(\omega,n)$, Cauchy-Schwarz gives
\begin{align*} \left\lvert  \left\langle \xi, Rx \right\rangle - \left\langle \omega, R^{\kappa} n  \right\rangle \right\rvert &\leq \left\lvert  \langle \xi - \omega, Rx \rangle \right\rvert + \left\lvert  \left\langle \omega, R( x- R^{\kappa-1} n )  \right\rangle \right\rvert \\
&\leq 4\epsilon. \end{align*}
Therefore by the defintion of $E$,
\begin{align*} &\lVert \chi_{\Omega}\rVert _{L^2(\sigma_{\Gamma})} \left\lvert \int \widehat{ f \sigma_{\Gamma}}(Rx) \, d\mu(x) \right\rvert\\
&= \sum_{n \in F} \sum_{\omega \in E} \left\lvert \int_{B(R^{ \kappa -1} n,\epsilon R^{-1})} \int_{\Gamma \cap B(\omega,\epsilon R^{-1})} 1  + O(\epsilon) \, d\sigma_{\Gamma}(\xi)\, d\mu(x)\right\rvert \\
&\sim \lvert E\rvert\lvert F\rvert \left(\epsilon R^{-1}\right)^{d+1} \left(\epsilon R^{-1} \right)^d\left(1+ O(\epsilon) \right). \end{align*}
Hence if $\epsilon$ is taken small enough (not depending on $R$), then
\[ \left\lvert \int \widehat{ f \sigma_{\Gamma}}(Rx) \, d\mu(x)\right\rvert  \sim \lvert E\rvert^{1/2} \lvert F\rvert R^{ -\frac{3d}{2} -1}. \]
Combining with \eqref{decaydefn} and \eqref{numbertheory} gives
\[ \beta\big(\alpha,\Gamma^d\big) \leq  \alpha-1 + \frac{2(d+1-\alpha)}{d+1}, \]
by the definition of $\kappa$. \end{proof}

\end{document}